\definecolor{primary}{HTML}{348aa7} 
\definecolor{accent1}{HTML}{5dd39e} 
\definecolor{accent2}{HTML}{348aa7} 
\definecolor{accent3}{HTML}{bce784} 
\newcommand{\defeq}{\mathrel{\overset{\makebox[0pt]{\mbox{\normalfont\tiny\sffamily
 def}}}{=}}}
\newcommand{\key}[1]{\emph{{#1}}}
\DeclareMathOperator{\id}{id}
\DeclareMathOperator{\Dens}{Dens}
\newtheorem{theorem}{Theorem}[section]
\newtheorem{lemma}[theorem]{Lemma}
\newtheorem{proposition}[theorem]{Proposition}
\newtheorem{corollary}[theorem]{Corollary}
\theoremstyle{definition}
\newtheorem{definition}[theorem]{Definition}
\newtheorem{example}[theorem]{Example}
\newtheorem{remark}[theorem]{Remark}
\title{Morse-Bott Volume Forms}
\date{}
\author{Luke Volk\thanks{
    Department of Mathematics,
    University of Toronto;
    e-mail: \texttt{luke.volk@mail.utoronto.ca}}~
     and Boris Khesin\thanks{
    Department of Mathematics,
    University of Toronto;
    e-mail: \texttt{khesin@math.toronto.edu}}
}
\begin{document}
    \maketitle

    \begin{abstract}
        A Morse-Bott volume form on a manifold is a top-degree form which vanishes along a non-degenerate critical submanifold. We prove that two such forms are diffeomorphic (by a diffeomorphism fixed on the submanifold) provided that their relative cohomology classes with respect to the submanifold coincide. For a zero submanifold of codimension at least 2, this means that two Morse-Bott volume forms with the same zero set are diffeomorphic if and only if they have equal total volumes. We show how ``Moser’s trick'' for establishing equivalence of non-degenerate volume forms can be adapted to this setting.
    \end{abstract}

    \section{Introduction}

    \subsection*{Background}

        In his 1965 paper \cite{moser1965volume}, Moser showed that any two volume forms $\eta_0,\eta_1$ with the same total volume on a compact, connected, and oriented manifold $M$ are related by a diffeomorphism $\Phi$ of $M$ via pullback, $\Phi^*\eta_1 = \eta_0$. To construct such a diffeomorphism, Moser's method was to connect the forms $\eta_0$ and $\eta_1$ by a path $(\eta_t)_{t\in[0,1]}$ in the same cohomology class and to look for a family of diffeomorphisms $(\Phi_t)_{t\in[0,1]}$ such that $\Phi_0 = \id_M$ and $\Phi_t^*\eta_t = \eta_0$. The latter is achieved by solving the corresponding infinitesimal version of the equation on the vector field generating the flow $\Phi_t$ and invoking the existence theorem for ODEs guaranteeing the existence of a flow for a given vector field, verifying the conditions for its existence for all $t\in[0,1]$. The strategy has dubbed variably as ``Moser's trick'', ``Moser's path method'', or the ``homotopy method''.

        This method has seen a wide variety of applications. Moser also applied the method in \cite{moser1965volume} to symplectic structures and twisted volume forms on non-orientable manifolds. Banyaga described Moser's approach for volume forms on manifolds with boundary in \cite{banyaga1974formes}, while Bruveris \emph{et al.} \cite{Bruveris2016MosersTO} extended it to volume forms on manifolds with corners. Cardona and Miranda \cite{cardona2019volume} considered an analogue of Moser's result for equivalence of top-degree forms transverse to the zero section with a shared zero hypersurface. Other authors have considered solutions to the so-called ``pullback equation'' $\Phi^*\eta_1 = \eta_0$ in more analytic contexts, see e.g. a summary of equivalence results for $k$-forms for any $k$ for H\"older spaces in \cite{csato2011pullback}.

    \subsection*{Main result}

        Let $M$ be a compact connected oriented $n$-dimensional manifold, which we equip with a reference (non-vanishing) volume form $\mu$. In this paper, we consider volume forms on $M$ which have a quadratic degeneration along an oriented submanifold $\Gamma\subset M$.    
        
        \begin{definition}
            A \key{Morse-Bott volume form} for $\Gamma$ on $M$ is a non-negative $n$-form $\eta\in\Omega^n(M)$ with zero set $\Gamma$ such that the ratio of $n$-forms $f \defeq \eta/\mu$ is a Morse-Bott function $f\colon M\to\mathbb R$ for which each component of $\Gamma$ is a non-degenerate critical submanifold. 
        \end{definition}

        Note that the critical zero set $\Gamma$ must have Morse-Bott index $0$ since the function $f$  is non-negative on $M$. Furthermore, the Morse-Bott property of $\eta$ does not depend on choice of the reference form $\mu$. We prove the necessary and sufficient conditions for diffeomorphism equivalence of such Morse-Bott volume forms:
        
        \begin{theorem}\label{th_morsebottmoser}
            Let $\eta_0$ and  $\eta_1$ be Morse-Bott volume forms for $\Gamma\subset M$  such that their relative cohomology classes with respect to $\Gamma$ coincide: 
                \[[\eta_0]=[\eta_1]\in H^n(M,\Gamma)\,.\]
            Then there exists a diffeomorphism $\Phi\colon M\to M$ such that $\Phi^*\eta_1 = \eta_0$ which restricts to the identity on $\Gamma$.
        \end{theorem}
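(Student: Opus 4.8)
The plan is to adapt Moser's trick, handling the degeneration along $\Gamma$ through a careful choice of primitive. First I would connect the two forms by the linear path $\eta_t = (1-t)\eta_0 + t\eta_1$ and check that each $\eta_t$ is again a Morse-Bott volume form for $\Gamma$: off $\Gamma$ the ratios $f_j = \eta_j/\mu$ are positive, so $f_t = (1-t)f_0 + t f_1 > 0$ there, while along $\Gamma$ both $f_0,f_1$ vanish with positive-definite transverse Hessian, hence so does $f_t$; thus the zero set of $\eta_t$ is exactly $\Gamma$ and is non-degenerate of index $0$. I would then seek an isotopy $(\Phi_t)$ with $\Phi_0=\id_M$ and $\Phi_t^*\eta_t=\eta_0$, generated by a time-dependent vector field $X_t$. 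Differentiating and using $d\eta_t=0$ reduces the problem, via Cartan's formula, to the infinitesimal equation $d\,\iota_{X_t}\eta_t = -(\eta_1-\eta_0)$, so it suffices to solve $\iota_{X_t}\eta_t = -\beta_t$ for a family of primitives $\beta_t$ of $\eta_1-\eta_0$. Crucially, the primitive is allowed to depend on $t$, which gives the room needed to track the changing degeneration of $\eta_t$.

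Away from $\Gamma$ the form $\eta_t$ is non-degenerate, so the algebraic equation $\iota_{X_t}\eta_t=-\beta_t$ has a unique smooth solution $X_t$ there, as in the classical case. The entire difficulty is the behaviour near $\Gamma$. In a tubular neighbourhood with normal coordinates $y$ and base coordinates $x$ one has $\eta_t=f_t\,\mu$ with $f_t$ vanishing quadratically, so writing $\iota_{X_t}\mu=-\gamma_t$ the equation becomes $f_t\,\gamma_t = \beta_t$. For $X_t$ to be smooth and to vanish along $\Gamma$ (which is exactly what forces $\Phi|_\Gamma=\id$) I need a primitive of $\eta_1-\eta_0$ that is divisible by $f_t$ with smooth quotient $\gamma_t$ vanishing on $\Gamma$. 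The hypothesis $[\eta_0]=[\eta_1]\in H^n(M,\Gamma)$ is precisely what should make this possible: it furnishes a primitive $\beta$ with $d\beta=\eta_1-\eta_0$ whose pullback to $\Gamma$ vanishes, and the non-degenerate, index-$0$ vanishing of $f_t$ is what allows this primitive to be upgraded to the required divisible form.

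The key step, and the one I expect to be the main obstacle, is this division (relative Poincaré) lemma near $\Gamma$. Mere high-order vanishing of $\beta$ is not sufficient — a quotient such as (an order-$3$ function)$/f_t$ need not be smooth — so instead I would build the primitive intrinsically from the fibrewise radial structure of the tubular neighbourhood. Using the Euler vector field $E$ and the scalings $m_s$, the homotopy operator $K\omega=\int_0^1 m_s^*(\iota_E\omega)\,\frac{ds}{s}$ produces a primitive of $\eta_1-\eta_0$ whose structure mirrors the quadratic vanishing of $f_t$; because the transverse Hessian is non-degenerate, this primitive can, after a $t$-dependent exact correction, be arranged to lie in the ideal generated by $f_t$, so that $\beta_t=f_t\,\gamma_t$ with $\gamma_t$ smooth and vanishing on $\Gamma$. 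Verifying that this division can be carried out smoothly in $t$, and that the local solution glues to the solution obtained off $\Gamma$ (subtracting a correction exact and supported away from $\Gamma$, which exists since the two primitives differ by a closed form whose class is trivial), is the technical heart of the argument. A clean alternative is to run the local construction first to make $\eta_0$ and $\eta_1$ agree in a neighbourhood of $\Gamma$ by a diffeomorphism fixing $\Gamma$, and then apply the ordinary Moser argument on the complement, where both forms are non-degenerate and the primitive can be taken to vanish near $\Gamma$.

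Finally I would assemble the near-$\Gamma$ and off-$\Gamma$ data into a single smooth time-dependent field $X_t$ with $X_t|_\Gamma=0$. Compactness of $M$ ensures that $X_t$ integrates to a flow $(\Phi_t)_{t\in[0,1]}$, and the standard Moser identity then yields $\Phi_1^*\eta_1=\eta_0$; since $X_t$ vanishes on $\Gamma$ for every $t$, the diffeomorphism $\Phi=\Phi_1$ restricts to the identity on $\Gamma$. As a consistency check, when $\Gamma$ has codimension at least $2$ the long exact sequence of the pair gives $H^n(M,\Gamma)\cong H^n(M)\cong\mathbb R$ with the isomorphism realized by integration, so the relative-class hypothesis collapses to equality of total volumes, matching the statement in the abstract.
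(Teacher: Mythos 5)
Your overall architecture matches the paper's: a local Moser argument near $\Gamma$ driven by the radial homotopy operator of the Euler vector field, followed by an ordinary Moser argument on the complement using a primitive that vanishes near $\Gamma$ (extracted from the relative class exactly as you describe). Indeed, the ``clean alternative'' you mention in passing --- first make the forms agree near $\Gamma$ by a diffeomorphism fixing $\Gamma$, then run classical Moser outside --- is precisely the route the paper takes. The gap is in the step you yourself flag as the technical heart: you assert that the radial primitive of $\eta_1-\eta_0$ ``can, after a $t$-dependent exact correction, be arranged to lie in the ideal generated by $f_t$,'' but you give no mechanism for this, and as stated the division fails. Take $f_0=y_1^2+y_2^2$ and $f_1=y_1^2+2y_2^2$ transversally: then $f_1-f_0=y_2^2$ and $f_t=y_1^2+(1+t)y_2^2$, and the quotient $y_2^2/f_t$ is bounded but not even continuous at the origin (its limit is $1/(1+t)$ along the $y_2$-axis and $0$ along the $y_1$-axis). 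The homotopy-operator primitive of $(f_1-f_0)\mu$ carries a factor of $f_1-f_0$, not of $f_t$, so solving $\iota_{X_t}(f_t\mu)=\beta_t$ forces the non-smooth quotient $(f_1-f_0)/f_t$ into $X_t$, and the flow argument breaks down.

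The missing idea is to normalize \emph{before} running Moser: by the semi-global Morse--Bott normal form for index $0$ (Corollary~\ref{cor_MorseBottindex0}, which rests on the convexity of the cone of positive-definite quadratic forms), there is a diffeomorphism $F$ of a neighbourhood of $\Gamma$, fixing $\Gamma$, taking $f_1$ to $f_0$. After applying $F$ one has $\rho_0=f\mu$ and $\rho_1=f\phi\,\mu$ with $\phi>0$, so $\dot\rho_t=f(\phi-1)\mu$ lies manifestly in the ideal generated by the single, $t$-independent function $f$; since $f$ may be taken fibrewise quadratic (Theorem~\ref{th_MorseBottLemma}), one has $g_s^*f=fe^{-2s}$ and the factor $f$ pulls out of the integral defining the primitive, giving $-\dot\rho_t=d(f\beta)$ with $\beta|_\Gamma=0$. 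The Moser equation then reduces to $\iota_{X_t}\bigl((1-t+t\phi)\mu\bigr)=\beta$, solvable against the genuinely non-vanishing form $(1-t+t\phi)\mu$, with $X_t|_\Gamma=0$. Supplying this normalization (and the Morse--Bott lemma behind it) is what turns your sketch into a proof; the remainder of your outline, including the treatment of the global step via the relative class and the codimension-$\ge 2$ consistency check, is sound.
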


        We treat this as two different cases: when the submanifold $\Gamma$ is a hypersurface, and when its codimension is at least 2.
       
        \begin{corollary}
            If the shared zero submanifold $\Gamma\subset M$ of two Morse-Bott volume forms $\eta_0$ and $\eta_1$ on $M$ is of codimension at least 2, the forms are diffeomorphic,
                \[\Phi^*\eta_1 = \eta_0 \text{ with } \Phi|_\Gamma = \id_\Gamma,\]
            if and only if they have equal total volumes of $M$,
                \[\int_M\eta_0 = \int_M\eta_1.\] 
        \end{corollary}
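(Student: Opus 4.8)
The plan is to derive the corollary directly from Theorem~\ref{th_morsebottmoser}, so the entire task reduces to understanding what the relative cohomology condition $[\eta_0]=[\eta_1]\in H^n(M,\Gamma)$ means when $\Gamma$ has codimension at least $2$. The key observation is that in this codimension regime the long exact sequence of the pair $(M,\Gamma)$ forces the relative top cohomology to be isomorphic to the absolute one. Indeed, the relevant fragment reads
\[
H^{n-1}(\Gamma)\longrightarrow H^n(M,\Gamma)\longrightarrow H^n(M)\longrightarrow H^n(\Gamma).
\]
Since $\Gamma$ is a closed manifold of dimension $n-k$ with $k\geq 2$, we have $\dim\Gamma = n-k \le n-2$, so both $H^{n-1}(\Gamma)$ and $H^n(\Gamma)$ vanish for degree reasons. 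Hence the middle map $H^n(M,\Gamma)\to H^n(M)$ is an isomorphism. So the first step is to write down this exact sequence and verify the two vanishing statements purely from the dimension bound.

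Next I would use the fact that $M$ is compact, connected, and oriented, so $H^n(M)\cong\mathbb{R}$ with the isomorphism given by integration $\omega\mapsto\int_M\omega$. Composing with the isomorphism from the previous step, the relative class $[\eta_i]\in H^n(M,\Gamma)$ is determined entirely by the number $\int_M\eta_i$. Therefore the hypothesis $[\eta_0]=[\eta_1]\in H^n(M,\Gamma)$ is \emph{equivalent} to the numerical equality $\int_M\eta_0=\int_M\eta_1$. This is the crux of the corollary: the abstract cohomological hypothesis of the theorem collapses to a single scalar invariant, the total volume, precisely when the zero set has codimension at least $2$.

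With that equivalence in hand, both directions are immediate. For the ``if'' direction, assuming equal total volumes gives $[\eta_0]=[\eta_1]\in H^n(M,\Gamma)$, and Theorem~\ref{th_morsebottmoser} produces the desired diffeomorphism $\Phi$ with $\Phi^*\eta_1=\eta_0$ and $\Phi|_\Gamma=\id_\Gamma$. For the ``only if'' direction, if such a $\Phi$ exists then $\int_M\eta_0=\int_M\Phi^*\eta_1=\int_M\eta_1$, where the last equality holds because $\Phi$ is a diffeomorphism and hence preserves the integral of a top-degree form (up to orientation, which $\Phi$ respects since it is homotopic to the identity—being the identity on $\Gamma$ and constructed via an isotopy in the theorem).

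I do not anticipate a genuine obstacle here: the only subtle point is justifying the vanishing of $H^{n-1}(\Gamma)$ and $H^n(\Gamma)$, which is elementary once one notes $\dim\Gamma\le n-2$, and confirming that the connecting and restriction maps in the exact sequence behave as claimed. One should take care that $\Gamma$ need not be connected, but since each component still has dimension at most $n-2$, the degree argument applies componentwise and the conclusion is unaffected. The orientation-preservation of $\Phi$ in the ``only if'' direction deserves a brief remark but follows from the isotopy construction underlying Theorem~\ref{th_morsebottmoser}.
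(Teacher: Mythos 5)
Your proposal is correct and follows essentially the same route as the paper: both reduce the corollary to Theorem~\ref{th_morsebottmoser} via the long exact sequence of the pair $(M,\Gamma)$, using $\dim\Gamma\le n-2$ to kill $H^{n-1}(\Gamma)$ and $H^{n}(\Gamma)$ and conclude $H^n(M,\Gamma)\cong H^n(M)\cong\mathbb{R}$. The extra remarks on the integration isomorphism and on orientation preservation in the ``only if'' direction are fine but not needed beyond what the paper records.
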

        
        If $\Gamma$ is a hypersurface in $M$, i.e. it has codimension $1$, it can be separating or not. Either case is covered by the following corollary:
  
        \begin{corollary}
            If the shared zero submanifold $\Gamma\subset M$ has codimension $=1$, two Morse-Bott volume forms $\eta_0$ and $\eta_1$ are diffeomorphic,
                \[\Phi^*\eta_1 = \eta_0 \text{ with } \Phi|_\Gamma = \id_\Gamma,\]
            if and only if they have coinciding volumes for each connected component $M_i$ of $M\setminus\Gamma$: 
                \[\int_{M_i}\eta_0 = \int_{M_i}\eta_1 \quad\text{ for all }i.\]
        \end{corollary}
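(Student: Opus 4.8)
The plan is to deduce the corollary from Theorem~\ref{th_morsebottmoser} by translating the relative cohomological hypothesis $[\eta_0]=[\eta_1]\in H^n(M,\Gamma)$ into the concrete volume conditions on the components $M_i$ of $M\setminus\Gamma$. Since an $n$-form on $M$ automatically pulls back to zero on the $(n-1)$-dimensional hypersurface $\Gamma$, each $\eta_j$ represents a relative class; working in the relative de Rham complex of pairs $(\omega,\theta)\in\Omega^n(M)\oplus\Omega^{n-1}(\Gamma)$ with $d(\omega,\theta)=(d\omega,\,i^*\omega-d\theta)$ for $i\colon\Gamma\hookrightarrow M$, the class of $\eta_j$ is represented by $(\eta_j,0)$. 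The key structural fact I would establish is that the integration map
\[
\mathrm{Vol}\colon H^n(M,\Gamma)\longrightarrow \mathbb{R}^{\#\{M_i\}},\qquad [\eta]\longmapsto \Bigl(\textstyle\int_{M_i}\eta\Bigr)_i,
\]
is an isomorphism; granting this, $[\eta_0]=[\eta_1]$ holds if and only if $\int_{M_i}\eta_0=\int_{M_i}\eta_1$ for every $i$, which is exactly the stated volume condition.

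To prove $\mathrm{Vol}$ is an isomorphism I would first check it is well-defined on cohomology: pairing the class of $(\omega,\theta)$ with the relative fundamental cycle of the closure $\overline{M_i}$ (a compact manifold with boundary in $\Gamma$) gives $\int_{M_i}\omega-\int_{\partial M_i}\theta$, and Stokes' theorem together with the closedness of the components of $\Gamma$ shows this vanishes on relatively exact forms. Surjectivity is immediate, since one may place bump $n$-forms of prescribed mass in each (open, nonempty) component $M_i$. For injectivity I would compute $\dim H^n(M,\Gamma)$ and match it against the number of components of $M\setminus\Gamma$: excising a closed tubular neighborhood $N$ of $\Gamma$ identifies $H^n(M,\Gamma)\cong H^n(M,N)\cong H^n(W,\partial W)$ for the compact manifold-with-boundary $W=M\setminus\mathrm{int}(N)$, and Lefschetz duality gives $H^n(W,\partial W)\cong H_0(W)\cong H_0(M\setminus\Gamma)=\mathbb{R}^{\#\{M_i\}}$. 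A surjection between real vector spaces of equal finite dimension is an isomorphism, so $\mathrm{Vol}$ is injective as well. This settles sufficiency: equal component volumes force $[\eta_0]=[\eta_1]$, and Theorem~\ref{th_morsebottmoser} then produces the desired $\Phi$.

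For necessity, suppose $\Phi^*\eta_1=\eta_0$ with $\Phi|_\Gamma=\id_\Gamma$. Writing $\eta_j=g_j\mu$ with $g_j>0$ on $M\setminus\Gamma$, the identity $(g_1\circ\Phi)\,\det{}_{\mu}(d\Phi)=g_0$ forces $\det{}_{\mu}(d\Phi)>0$, so $\Phi$ is orientation-preserving. Since $\Phi$ fixes $\Gamma$ pointwise, its differential fixes $T\Gamma$ and induces an orientation-preserving automorphism of the normal line bundle $N\Gamma$; such an automorphism is a positive fiberwise scaling and hence cannot interchange the two local sides of $\Gamma$. Consequently $\Phi$ maps each component $M_i$ onto itself, and the change-of-variables formula gives $\int_{M_i}\eta_0=\int_{M_i}\Phi^*\eta_1=\int_{M_i}\eta_1$.

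I expect the main obstacle to be the cohomological identification in the sufficiency direction — specifically, pinning down $\dim H^n(M,\Gamma)$ and verifying that integration over components is the correct nondegenerate pairing — since this must hold uniformly whether or not $\Gamma$ separates $M$ (when $\Gamma$ is non-separating, several sides of $\Gamma$ lie in one component $M_i$, and the corresponding volume conditions collapse into a single one, consistent with $H_0(M\setminus\Gamma)$). The orientation and side-preservation step in the necessity direction is the other point requiring care, as it is precisely what excludes diffeomorphisms that fix $\Gamma$ yet swap the regions it bounds.
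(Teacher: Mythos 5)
Your proof is correct and follows the same overall route as the paper: reduce to Theorem~\ref{th_morsebottmoser} by identifying the relative cohomology condition $[\eta_0]=[\eta_1]\in H^n(M,\Gamma)$ with equality of component volumes. You are in fact more thorough than the paper in two places. The paper only verifies the implication ``same relative class $\Rightarrow$ equal component volumes'' via Stokes' theorem and the exactness of $i^*\omega$ on $\Gamma$; your Lefschetz-duality dimension count $H^n(M,\Gamma)\cong H^n(W,\partial W)\cong H_0(M\setminus\Gamma)$, combined with surjectivity of the integration map, supplies the converse implication that is actually needed to invoke Theorem~\ref{th_morsebottmoser}, and which the paper leaves implicit. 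Likewise, your necessity argument --- that $\Phi$ is orientation-preserving, acts as the identity on $T\Gamma$, hence scales the normal line positively and cannot swap the local sides of $\Gamma$, so it preserves each $M_i$ --- is a direct change-of-variables proof of a direction the paper does not spell out. Both additions are sound (note only that when $\Gamma$ is non-separating one should read $\overline{M_i}$ as the corresponding component of $M$ cut along $\Gamma$, so that Stokes applies to a genuine manifold with boundary; your parenthetical remark shows you are aware of this).
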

  
        The same result also holds for volume forms which have hypersurface $\Gamma\subset M$ as a  non-critical zero set. Let $\eta_0$ and $\eta_1$ be two $n$-forms on $M$ with the same \key{non-critical zero set} $\Gamma$, i.e. it is a non-critical zero set for each of the corresponding functions $\eta_i/\mu$, $i=0,1$. Note that $\Gamma$ must be a compact oriented hypersurface in this case. 
 
        \begin{theorem}\label{th_cardonamirandamoser}
            Two $n$-forms $\eta_0$ and $\eta_1$ with the same non-critical zero set $\Gamma\subset M$ are diffeomorphic,
                \[\Phi^*\eta_1 = \eta_0 \text{ with } \Phi|_\Gamma = \id_\Gamma,\]
            if and only if they represent the same relative cohomology classes $[\eta_0]=[\eta_1]\in H^n(M,\Gamma)$, or equivalently, they have coinciding volumes of each connected component $M_i$ of $M\setminus \Gamma$.
        \end{theorem}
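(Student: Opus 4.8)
The plan is to run Moser's path method directly, with essentially all of the difficulty concentrated in producing a primitive of $\eta_1-\eta_0$ that vanishes to second order along $\Gamma$. First I would dispose of the two cheap parts. For the equivalence of the two stated conditions, note that since $\dim\Gamma=n-1$ every $\eta\in\Omega^n(M)$ vanishing on $\Gamma$ restricts to $0$ there and hence defines a relative class $[\eta]\in H^n(M,\Gamma)$; this class is detected by integration over the relative fundamental cycles of the closures $\overline{M_i}$ of the components of $M\setminus\Gamma$, which generate $H_n(M,\Gamma)$, so $[\eta_0]=[\eta_1]$ holds precisely when $\int_{M_i}\eta_0=\int_{M_i}\eta_1$ for every $i$. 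For necessity (the easy direction), an orientation-preserving $\Phi$ with $\Phi|_\Gamma=\id_\Gamma$ preserves the co-orientation of $\Gamma$, hence preserves each component $M_i$ together with its orientation, so $\int_{M_i}\eta_0=\int_{M_i}\Phi^*\eta_1=\int_{M_i}\eta_1$.

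For the substantive (``if'') direction I would first record a sign observation that legitimizes the linear interpolation. Because $\Gamma$ is exactly the zero set of each $f_i=\eta_i/\mu$, the function $f_i$ has constant sign on each $M_i$, and that sign equals the sign of the nonzero integral $\int_{M_i}\eta_i$; since these integrals agree, $\eta_0$ and $\eta_1$ carry the same sign on every $M_i$. Consequently, setting $\eta_t=(1-t)\eta_0+t\eta_1$, each $\eta_t$ is nonzero off $\Gamma$, and in a collar $\Gamma\times(-\varepsilon,\varepsilon)$ with normal coordinate $s$ in which $\mu=m\,ds\wedge\omega_\Gamma$ and $f_i=s\,g_i$ with $g_i\neq0$ on $\Gamma$, one has $\eta_t=s\,\tilde a_t\,ds\wedge\omega_\Gamma$ with $\tilde a_t\neq0$; thus every $\eta_t$ again has $\Gamma$ as a non-critical zero set.

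The heart of the matter is to choose $\beta\in\Omega^{n-1}(M)$ with $d\beta=\eta_1-\eta_0$ vanishing to second order along $\Gamma$, and I expect this to be the main obstacle. I would build it in two pieces. Locally in the collar, writing $\eta_1-\eta_0=c\,ds\wedge\omega_\Gamma$ with $c(0,\cdot)=0$, the form $\beta_{\mathrm{loc}}=\bigl(\int_0^s c(\sigma,\cdot)\,d\sigma\bigr)\omega_\Gamma$ satisfies $d\beta_{\mathrm{loc}}=\eta_1-\eta_0$, and since $c=O(s)$ its coefficient is $O(s^2)$. Choosing $\rho\equiv1$ near $\Gamma$ and supported in the collar, the closed top-form $\omega':=(\eta_1-\eta_0)-d(\rho\beta_{\mathrm{loc}})$ vanishes near $\Gamma$; Stokes' theorem together with $\beta_{\mathrm{loc}}|_\Gamma=0$ gives $\int_{M_i}\omega'=\int_{M_i}(\eta_1-\eta_0)=0$ for each $i$. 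Hence $\omega'$ is a compactly supported top-form of zero integral on each interior $M_i$, so by $H^n_c(M_i)\cong\mathbb R$ it equals $d\beta'$ for some $\beta'$ supported away from $\Gamma$. Then $\beta:=\rho\beta_{\mathrm{loc}}+\beta'$ is the desired global primitive, equal to $\beta_{\mathrm{loc}}=O(s^2)$ near $\Gamma$. This is exactly where the equal-volume hypothesis is consumed.

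With $\beta$ in hand the rest is routine. Defining $X_t$ on $M\setminus\Gamma$ by $\iota_{X_t}\eta_t=-\beta$ (legitimate since $\eta_t$ is nondegenerate there), and writing $X_t=u\,\partial_s+V$ with $V$ tangent to the slices, the collar identity $\iota_{X_t}\eta_t=s\tilde a_t(u\,\omega_\Gamma-ds\wedge\iota_V\omega_\Gamma)$ forces $u$ and $\iota_V\omega_\Gamma$ to be $O(s^2)/s=O(s)$, so $X_t$ extends smoothly across $\Gamma$ and vanishes there. The time-dependent field $X_t$ is smooth on the compact manifold $M$, hence integrates to a flow $\Phi_t$ defined for all $t\in[0,1]$; since $X_t|_\Gamma=0$ we get $\Phi_t|_\Gamma=\id_\Gamma$, and the choice $\iota_{X_t}\eta_t=-\beta$ yields $\tfrac{d}{dt}\Phi_t^*\eta_t=\Phi_t^*\bigl(\dot\eta_t+d\iota_{X_t}\eta_t\bigr)=\Phi_t^*\bigl((\eta_1-\eta_0)-d\beta\bigr)=0$, so $\Phi:=\Phi_1$ satisfies $\Phi^*\eta_1=\eta_0$ with $\Phi|_\Gamma=\id_\Gamma$.
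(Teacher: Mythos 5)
Your proof is correct, but it takes a genuinely different route from the paper's. The paper runs Moser's trick in two stages: first a semi-local stage near $\Gamma$, where the defining function is put into fibre-wise \emph{linear} normal form via an Euler-like vector field (Lemma~\ref{lem_definingfunctionhomdeg1}) and a primitive of the form $f\beta$ with $\beta|_\Gamma=0$ is produced by integrating along the flow of $-\mathcal E$, yielding a diffeomorphism $G$ matching the forms in a neighbourhood of $\Gamma$; then a second, global Moser stage in which the relative cohomology hypothesis supplies a primitive of $\zeta_0-\zeta_1$ vanishing near $\Gamma$, giving $H$, and finally $\Phi=H\circ G$. You instead perform a single Moser flow with one global primitive $\beta$ of $\eta_1-\eta_0$ vanishing to second order along $\Gamma$, built by gluing the explicit collar primitive $\bigl(\int_0^s c\,d\sigma\bigr)\,\omega_\Gamma$ (Hadamard's lemma in the normal direction, playing the role of the Euler-vector-field integral) to a compactly supported correction obtained from $H^n_c(M_i)\cong\mathbb R$ --- this is exactly where the equal-volume hypothesis is consumed, just as the relative class is consumed in the paper's second stage. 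Both arguments rest on the same mechanism: the primitive vanishes one order higher along $\Gamma$ than $\eta_t$ does, so the Moser field extends smoothly across $\Gamma$ and vanishes there. Your route buys a shorter, self-contained argument that bypasses the tubular-neighbourhood normal form and the composition of two diffeomorphisms; the paper's route buys uniformity with the Morse-Bott case, where the same two-stage template applies verbatim with degree $2$ in place of degree $1$. Two smaller points in your favour: your sign observation (equal component-wise volumes force $\eta_0$ and $\eta_1$ to have the same sign on each $M_i$, hence $\phi=f_1/f_0>0$ and the linear interpolation remains a folded form) is genuinely needed and only implicit in the paper's claim that $(1-t)\mu+t\phi\,\mu$ is non-vanishing; and your restriction to orientation-preserving $\Phi$ in the ``only if'' direction is the correct reading --- for sign-changing folded forms an orientation-reversing diffeomorphism fixing $\Gamma$ pointwise can relate forms with different component-wise volumes, so this hypothesis, automatic in the non-negative Morse-Bott setting, is genuinely part of the statement here.
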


        This theorem strengthens one of the results of Cardona and Miranda \cite{cardona2019volume}, who proved that two folded volumes forms with the same non-critical zero hypersurface $\Gamma\subset M$ can be mapped to each other by a diffeomorphism taking $\Gamma$ to itself, although not necessarily the identity on $\Gamma$. (Note that while $n$-forms change sign across the non-critical zero set $\Gamma\subset M$, the term \emph{folded}, or \emph{transversally vanishing}, ``volume forms'' became standard and we adapt it in this paper.)

        \begin{remark}
        The assumption of the orientability of $M$ and $\Gamma$ can be weakened to require only 
        orientability of $M\setminus \Gamma$. For ${\rm codim}\,\Gamma\ge 2$ this reduces to orientability of $M$. In the case of ${\rm codim}\,\Gamma=1$ in a nonorientable $M$, instead of volume forms one needs to consider densities, or pseudo-forms changing sign along orientation-reversing paths. 
        Theorem \ref{th_morsebottmoser} naturally extends to this setting: its proof combines tubular neighbourhood embeddings, which hold in any setting, with adapting the classical Moser theorem to each (orientable) connected component of $M\setminus \Gamma$. (As an example, it is easy to construct a Morse-Bott density on the even-dimensional real projective space with a hyperplane as a critical set: for instance take the product of function $x_1^2$ with the standard volume element on the sphere $S^{2k}$ and project it to $\mathbb {RP}^{2k}$ via the antipodal map.)
        An extension of Theorem  \ref{th_cardonamirandamoser} to the nonorientable setting might be more subtle, requiring certain averaging on the orientation cover, cf. normal forms for  Morse functions and densities in the $n=2$ area-preserving case in \cite{izosimov2025}.
        \end{remark}

    \subsection*{Motivation}
    
        A motivation for this problem comes from the Madelung transform, which establishes an equivalence of quantum mechanics and equations of compressible fluids \cite{khesin2019geometry}. Namely, let a wave function $\psi\colon M\to\mathbb C$ on a manifold $M$ satisfy the non-linear Schr\"odinger (NLS) equation,
            \[{i}\partial_t\psi +\Delta\psi +  V\psi + f(|\psi|^2)\psi=0,\]
        where $V\colon M\to \mathbb R$ and $f\colon \mathbb R_{+}\to \mathbb R$. Then the \key{Madelung transform} $\psi=\sqrt{\rho e^{i\theta}}$ allows one to rewrite the quantum mechanics of the NLS equation in a ``hydrodynamical form'' as equations of a barotropic-type fluid on the velocity field $v \defeq \nabla\theta$ and the density $\rho$ as follows:
            \[
            \begin{cases} 
                \partial_t v + \nabla_v v + \nabla\Big(V + f(\rho) - \frac{\Delta\sqrt{\rho}}{\sqrt{\rho}} \Big) = 0 \\ 
                \partial_t\rho +{\rm div}(\rho v) = 0\,.
            \end{cases}
            \]
        The Madelung transform $\psi\mapsto (\rho, \theta)$ is well-defined provided that $\psi$ does not vanish on $M$ and it is understood modulo a phase factor ($\psi\sim \psi e^{i\alpha}$), while $\theta$ is understood to be modulo an additive constant on $M$. Moreover, by confining to the unit sphere of normalized wave functions $\psi$ and the space $\Dens(M)$ of normalized densities $\rho$, the Madelung transform can be understood as the map $\mathbb{CP}(M,\mathbb C\setminus 0) \to T^*\Dens(M)$. It turns out to be a symplectomorphism for the corresponding natural symplectic structures on those spaces, and a K\"ahler map between the Fubini-Study and Fisher-Rao metrics respectively, see \cite{khesin2019geometry}.

        However, the presence of zeros of the (complex-valued) wave function $\psi$ brings substantial complications. A non-critical zero set $\Gamma$ of $\psi$ has codimension 2 in $M$, and the corresponding density function $\rho$ can be understood as a Morse-Bott volume form for $\Gamma\subset M$. The fact that $\psi$ is univalued on $M$ imposes the ``quantization constraint'' on the phase function $\theta$: its change along any path in $M$ going around $\Gamma$ must be a multiple of $4\pi$, see numerous discussions in \cite{fritsche2009stochastic,wallstrom1994inequivalence}. The above equivalence theorems for the Morse-Bott volume forms allow one to deal with zero submanifolds of wave functions by using  more convenient ``normal forms'' of the corresponding densities around zeros.

    \subsection*{Acknowledgements}
    
        We are indebted to Alexander Givental for key suggestions on the proof, and to Yael Karshon for fruitful discussions.  We are also grateful to the anonymous referee for useful suggestions. B.K. was partially supported by an NSERC Discovery Grant.

    \section{The Morse-Bott Lemma}

        Morse-Bott functions have local normal forms which will allow us to more easily handle behaviour near the zero set of Morse-Bott volume forms. A (``local'') normal form in a neighbourhood of a point can be found, e.g., in \cite{banhurtub2004proof}. Below we outline a proof of a (``semi-global'') normal form in a neighbourhood of the critical set $\Gamma$ using Euler-like vector fields, following \cite{meinrenken2021euler}.

    \subsection*{Euler-Like vector fields}

        Given a submanifold $\Gamma\subseteq M$ of codimension $k$, we denote the \key{normal bundle} of $\Gamma$ in $M$ by:
            \[\nu(M,\Gamma) \defeq TM|_\Gamma/T\Gamma.\]
        Morphisms between pairs $(M,\Gamma)$ and $(M',\Gamma')$ are smooth maps $f\colon M\to M'$ taking $\Gamma$ to $\Gamma'$. Given a morphism $f\colon (M,\Gamma)\to (M',\Gamma')$, we associate to it the linear map $\nu(f)$ defined as follows:
            \begin{align*}
                \nu(f)\colon \nu(M,\Gamma) &\to \nu(M',\Gamma') \\
                v+T\Gamma &\mapsto f_*v + T\Gamma',
            \end{align*}
        which we call the \key{linearisation} of $f$.

        The \key{Euler vector field to $\Gamma$} is the vector field $\mathcal E$ on the normal bundle $\nu(M,\Gamma)$ which is the Euler vector field in the usual sense on each fibre. That is, if $x\in \Gamma$ and the fibre $\nu(M,\Gamma)_x$ is given the coordinates $y^i$, then:
            \[\mathcal E_x = \sum_{i=1}^{k} y^i\frac{\partial}{\partial y^i}.\]
        If a vector field $X$ on $M$ is tangent to $\Gamma$ (i.e., for each $p\in \Gamma$, $X_p\in T_p\Gamma$) then $X$ can be seen as a morphism $X\colon(M,\Gamma)\to(TM,T\Gamma)$ of pairs. We say that $X$ is \key{Euler-like} for $\Gamma$ if its linearisation,
            \[\nu (X)\colon \nu(M,\Gamma)\to \nu(TM,T\Gamma) \cong T\nu(M,\Gamma),\]
        is the Euler vector field to $\Gamma$.

        \begin{example}
            For $M=\mathbb R^n$ and $\Gamma = \{0\}$, we have that $\nu(M,\Gamma) = T_0\mathbb R^n$ and a vector field 
                \[X = \sum_{i=1}^nX^i\frac{\partial}{\partial x^i}\]
            is Euler-like if for all $1\leq i\leq n$ we have $X^i(0) = 0$ and $DX^i|_0 = x^i$.
        \end{example}

    \subsection*{Tubular neighbourhood embeddings}

        A \key{tubular neighbourhood embedding} of $\Gamma$ is a neighbourhood $U\subseteq\nu(M, \Gamma)$ of the zero section in the normal bundle and an embedding $\varphi\colon U\to M$ such that:
            \begin{enumerate}[label=(\roman*)]
                \item For each $x\in \Gamma$, $\varphi(0_x) = x$. That is, $\varphi|_\Gamma = \id_\Gamma$ after identifying $\Gamma$ with the zero section in $\nu(M,\Gamma)$.
                \item The linearisation $\nu(\varphi)\colon\nu(U,\Gamma)\cong \nu(M,\Gamma)\to\nu(M,\Gamma)$ is the identity, $\id_{\nu(M,\Gamma)}$.
            \end{enumerate}
        The benefit of Euler-like vector fields is their correspondence with tubular neighbourhood embeddings as the following theorem summarizes:
            
        \begin{theorem}\label{th_tubularnbhd}
            An Euler-like vector field $X$ for $(M,\Gamma)$ determines a unique maximal (with respect to inclusion) tubular neighbourhood embedding $\varphi\colon U \to M$ of $\Gamma$ with $U\subseteq\nu(M,\Gamma)$ such that $\varphi^*X = \mathcal E$.
        \end{theorem}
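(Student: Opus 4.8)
The plan is to build the tubular neighbourhood embedding $\varphi$ by flowing along the Euler-like vector field $X$. The key observation is that the linearisation condition forces $\Gamma$ to be an attracting critical manifold for the flow of $X$ as $t\to-\infty$: since $\nu(X)=\mathcal E$ and the Euler field $\mathcal E = \sum_i y^i\partial_{y^i}$ has linearisation with all eigenvalues equal to $+1$ in the normal directions, the time-$t$ flow of $\mathcal E$ is fibrewise scaling by $e^{t}$. Running time backwards contracts onto $\Gamma$.

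First I would set up the diffeomorphism via the flow. Let $\Phi_t$ denote the (locally defined) flow of $X$ on $M$, and let $\mathcal{F}_t$ denote the flow of the Euler field $\mathcal E$ on $\nu(M,\Gamma)$, which is explicitly $\mathcal F_t(x,y)=(x,e^t y)$. The intended $\varphi$ should intertwine these flows: $\varphi\circ\mathcal F_t = \Phi_t\circ\varphi$, which is exactly the relation $\varphi^*X=\mathcal E$ in integrated form. The natural construction is to define $\varphi$ on the image of a neighbourhood of the zero section by
    \[\varphi(v) \defeq \lim_{t\to-\infty}\Phi_{-t}\bigl(\exp_{\text{lin}}(\mathcal F_t v)\bigr),\]
or more cleanly, following \cite{meinrenken2021euler}, to use the flow to push a reference embedding into agreement with $X$. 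Concretely, one first fixes \emph{any} tubular neighbourhood embedding $\psi_0$ (existence is the classical tubular neighbourhood theorem) and then corrects it using the flow so that the pulled-back vector field becomes exactly $\mathcal E$; the correction is well-defined precisely because the difference between $\psi_0^*X$ and $\mathcal E$ vanishes to the right order along $\Gamma$, so the flows can be matched in the limit.

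The main steps, in order, are: (1) verify that because $X$ is Euler-like, it vanishes exactly on $\Gamma$ and its linearisation along $\Gamma$ has normal eigenvalues all equal to $1$, so that for $v$ near the zero section the backward flow $\Phi_{-t}(v)$ converges to $\Gamma$ as $t\to+\infty$ at an exponential rate; (2) show the limiting map $\varphi$ is well-defined, smooth, and injective on a neighbourhood $U$ of the zero section, using the uniform exponential contraction to get convergence in all derivatives (a Hadamard-type or Sternberg-type linearisation argument); (3) check that $\varphi$ satisfies properties (i) and (ii) of a tubular neighbourhood embedding --- property (i) is immediate since $\Gamma$ is fixed by $\Phi_t$, and property (ii) follows because the construction is arranged so that $\nu(\varphi)=\id$; (4) verify the intertwining relation $\varphi\circ\mathcal F_t=\Phi_t\circ\varphi$, which gives $\varphi^*X=\mathcal E$ upon differentiating at $t=0$; and finally (5) establish uniqueness and maximality: if $\varphi'$ is another such embedding, then $\varphi^{-1}\circ\varphi'$ is a diffeomorphism of neighbourhoods of the zero section commuting with $\mathcal F_t$ and with trivial linearisation, and any such map must be the identity, so the two embeddings agree on their common domain and can be glued to a maximal one.

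The \textbf{hard part} will be step (2): proving that the limit defining $\varphi$ exists and is smooth up to and including the zero section $\Gamma$, with the correct derivative there. Away from $\Gamma$ everything is a routine application of smooth dependence of ODE flows on initial conditions, but \emph{at} $\Gamma$ the flow time goes to infinity and one must show uniform convergence of $\Phi_{-t}$ together with all its derivatives. This is where the exponential contraction rate from the unit normal eigenvalues is essential: it dominates the polynomial growth of higher derivatives of the flow, so that the Taylor coefficients converge. I would control this with a standard estimate comparing $X$ to its linearisation $\mathcal E$ in a normal coordinate chart (the difference being $O(|y|^2)$), showing the correction integral $\int_0^\infty (\text{error along the flow})\,dt$ converges absolutely. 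Uniqueness in step (5) is comparatively soft, reducing to the fact that a germ of a diffeomorphism commuting with fibrewise scaling and tangent to the identity must equal the identity.
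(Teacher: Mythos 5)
The paper does not actually prove Theorem~\ref{th_tubularnbhd}; it defers entirely to \cite{meinbursztyn2019splitting}, and your argument is essentially the one given there: reduce via a reference tubular neighbourhood embedding to the case where $X$ lives on $\nu(M,\Gamma)$ and differs from $\mathcal E$ by a term vanishing to second order along the zero section, obtain $\varphi$ as the limit of $\Phi_{-t}\circ\mathcal F_{t}$ as $t\to-\infty$ (convergence in all derivatives coming from the quadratic vanishing of $X-\mathcal E$, which dominates the unit contraction rate), and get uniqueness from the fact that a germ of a diffeomorphism commuting with fibrewise scaling and with trivial linearisation is the identity. Your sketch is sound and matches the cited proof in all essentials, so there is nothing to correct beyond supplying the standard estimate you already identify as the hard step.
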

        
        \noindent We refer for the proof to  \cite{meinbursztyn2019splitting}.

    \subsection*{Fibre-wise polynomial functions}

        We say that $f\colon M\to\mathbb R$ is \key{Morse-Bott for $\Gamma\subset M$} if $\Gamma$ is a non-degenerate critical submanifold of $f$. Without loss of generality, we assume $f|_\Gamma = 0$.  The Euler vector field to $\Gamma$ gives a handy method for identifying fibre-wise homogeneous polynomials on $\nu(M,\Gamma)$.
            
     

        \begin{proposition}\label{prop_eulerhomogeneous}
            Let $\mathcal E$ be the Euler vector field to $\Gamma$ and $f\in C^\infty(\nu(M,\Gamma))$ a function on $\nu(M,\Gamma)$. If $\mathcal L_{\mathcal E}f = kf$ for some $k\in\mathbb N$, then $f$ is fibre-wise a homogeneous polynomial of order $k$.
        \end{proposition}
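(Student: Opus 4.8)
The plan is to reduce the statement to a single fibre and then invoke the classical equivalence between Euler's differential equation and homogeneous polynomials. Since the Euler vector field $\mathcal E$ is vertical---tangent to the fibres of $\nu(M,\Gamma)$---the equation $\mathcal L_{\mathcal E}f = kf$ only constrains the derivatives of $f$ along the fibre directions. Accordingly I would fix a point $x\in\Gamma$, choose linear coordinates $y^1,\dots,y^m$ on the fibre $V\defeq\nu(M,\Gamma)_x\cong\mathbb R^m$ (where $m$ is the codimension of $\Gamma$), and note that the hypothesis reads $\sum_i y^i\,\partial f/\partial y^i = kf$ on $V$, with $f|_V$ smooth. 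The goal on each fibre is to show $f|_V$ is a homogeneous polynomial of degree $k$; ranging over all $x$ then yields the fibre-wise conclusion.

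First I would integrate the equation using the flow of $\mathcal E$. The flow of $\mathcal E=\sum_i y^i\,\partial_{y^i}$ is the scaling $\phi_s(y)=e^s y$, so that for fixed $y$ the function $g(s)\defeq f(e^s y)$ satisfies $g'(s)=(\mathcal L_{\mathcal E}f)(e^s y)=k\,g(s)$, whence $g(s)=e^{ks}f(y)$. This is precisely the positive-homogeneity relation $f(ty)=t^k f(y)$ for all $t>0$ and $y\in V$.

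The main step---and the point where smoothness across the zero section is indispensable---is to promote positive homogeneity to polynomiality. (For instance $|y|^k$ is positively homogeneous of degree $k$ yet fails to be a polynomial for odd $k$, exactly because it is not smooth at the origin.) Differentiating the homogeneity relation shows that each partial derivative $\partial^\alpha f$ is smooth and positively homogeneous of degree $k-|\alpha|$. When $|\alpha|=k+1$ this degree is $-1$, so $\partial^\alpha f(ty)=t^{-1}\partial^\alpha f(y)$; letting $t\to 0^+$ forces $\partial^\alpha f(y)=0$ for every $y$, since otherwise the right-hand side blows up while the left-hand side stays bounded by continuity at $0$. Hence all partial derivatives of order $k+1$ vanish identically, and by Taylor's theorem $f|_V$ is a polynomial of degree at most $k$.

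Finally I would pin down the degree: decomposing $f|_V=\sum_{j=0}^{k}P_j$ into homogeneous components $P_j$ of degree $j$ and substituting into $f(ty)=t^k f(y)$, matching powers of $t$ gives $P_j=0$ for $j\neq k$, so $f|_V=P_k$ is homogeneous of degree exactly $k$. The only delicate point is the blow-up argument of the previous paragraph, which genuinely requires $f\in C^\infty(\nu(M,\Gamma))$ rather than mere positive homogeneity; this is exactly the hypothesis we are given.
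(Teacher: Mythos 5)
Your proof is correct and takes essentially the same route as the paper, whose entire proof is the one-line remark that this is a fibre-wise application of Euler's homogeneous function theorem. You merely spell out the detail the paper leaves implicit---upgrading positive homogeneity $f(ty)=t^k f(y)$ to genuine polynomiality by showing all order-$(k+1)$ fibre derivatives vanish---and that step is sound and is exactly where the smoothness hypothesis enters.
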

        \begin{proof}
            This is a fibre-wise application of Euler's homogeneous function theorem.
        \end{proof}

        The following proof of the Morse-Bott lemma (as sketched in \cite{meinrenken2021euler}) makes use of Euler-like vector fields, and can be regarded as a semi-global, fibre-wise version of the Morse lemma with parameters.
        
        \begin{theorem}[Morse-Bott lemma~{\cite{meinrenken2021euler}}]\label{th_MorseBottLemma}
            If $f\colon M\to\mathbb R$ is Morse-Bott for $(M,\Gamma)$ and $f|_\Gamma = 0$, then there exists a tubular neighbourhood embedding $\varphi\colon U \to M$ (with $U \subseteq\nu(M,\Gamma)$) such that $\varphi^* f$ is fibre-wise a homogeneous polynomial of degree 2 (i.e. fibre-wise quadratic).
        \end{theorem}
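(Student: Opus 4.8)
The plan is to reduce the statement to the construction of a single Euler-like vector field and then invoke the two results already established. Concretely, I would look for a vector field $X$, defined on a neighbourhood of $\Gamma$ and Euler-like for $(M,\Gamma)$, satisfying the linear equation
\[
    \mathcal L_X f = 2f .
\]
Granting such an $X$, Theorem \ref{th_tubularnbhd} produces a tubular neighbourhood embedding $\varphi\colon U\to M$ with $\varphi^*X=\mathcal E$, and naturality of the Lie derivative under pullback gives $\mathcal L_{\mathcal E}(\varphi^*f)=\varphi^*(\mathcal L_X f)=2\,\varphi^*f$ on $U$. Proposition \ref{prop_eulerhomogeneous} with $k=2$ then says precisely that $\varphi^*f$ is fibre-wise a homogeneous polynomial of degree $2$, which is the assertion. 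So the entire content lies in producing $X$.

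To build $X$ locally, I would work in coordinates $(x,y)$ adapted to $\Gamma=\{y=0\}$, with $x$ along $\Gamma$ and $y$ normal. Since $f|_\Gamma=0$ and $df|_\Gamma=0$, two applications of Hadamard's lemma in the normal directions write $f=\sum_{i,j}y^iy^j b_{ij}(x,y)$, where $b_{ij}(x,0)$ is (half) the fibre-wise Hessian and is a nondegenerate symmetric matrix by the Morse-Bott hypothesis. A direct computation gives $\mathcal L_{\mathcal E_0}f=2f+R$ for the model Euler field $\mathcal E_0=\sum_i y^i\partial_{y^i}$, where the remainder $R=\sum_{i,j,k}(\partial_{y^i}b_{jk})\,y^iy^jy^k$ vanishes to third order along $\Gamma$. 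The key algebraic step is a fibre-wise division: writing $\partial_{y^i}f=\sum_j M_{ij}y^j$ with $M_{ij}=2b_{ij}(x,0)+O(|y|)$ invertible near $\Gamma$, and $R=\sum_j y^jR_j$ with each $R_j$ vanishing to second order, I set $C^i=-\sum_j (M^{-1})_{ji}R_j$, so that $\sum_i C^i\,\partial_{y^i}f=-R$. Then $X=\mathcal E_0+\sum_i C^i\partial_{y^i}$ has normal components $y^i+O(|y|^2)$ and no tangential component, hence is Euler-like, and satisfies $\mathcal L_X f=2f$ on the chart.

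To globalise, I would cover $\Gamma$ by such charts, obtain local Euler-like solutions $X_\alpha$, and average them with a partition of unity $\{\chi_\alpha\}$ subordinate to the cover, setting $X=\sum_\alpha \chi_\alpha X_\alpha$ (extended by a cutoff equal to $1$ near $\Gamma$). The point making this work is that both conditions imposed on $X$ are affine with the same target: since each $X_\alpha$ satisfies $\mathcal L_{X_\alpha}f=2f$ with the same right-hand side, $\mathcal L_X f=\sum_\alpha \chi_\alpha\cdot 2f=2f$; and since linearisation is linear and depends only on first-order data along $\Gamma$, $\nu(X)=\sum_\alpha(\chi_\alpha|_\Gamma)\,\nu(X_\alpha)=\sum_\alpha(\chi_\alpha|_\Gamma)\,\mathcal E=\mathcal E$. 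The resulting $X$ is therefore Euler-like near $\Gamma$ and solves $\mathcal L_X f=2f$ there, completing the reduction.

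I expect the main obstacle to be the fibre-wise division step, which is exactly where non-degeneracy of the critical submanifold enters: invertibility of the fibre-wise Hessian $b_{ij}(x,0)$ is what lets the third-order remainder $R$ be absorbed into an $O(|y|^2)$ correction of the normal components of $X$ without destroying the Euler-like property. One should also confirm that the cutoff is harmless: Euler-like fields contract toward $\Gamma$, so the embedding from Theorem \ref{th_tubularnbhd} has image in the region where $\mathcal L_X f=2f$ holds, and the final application of Proposition \ref{prop_eulerhomogeneous} is valid on $U$.
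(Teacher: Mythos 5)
Your proposal is correct and follows essentially the same route as the paper: both reduce the statement to producing an Euler-like vector field $X$ near $\Gamma$ satisfying $\mathcal L_X f = 2f$ (via the Hadamard-type expansion $f=\sum_{i,j}y^iy^jb_{ij}$ and fibre-wise inversion of the matrix relating $\partial_{y^i}f$ to the $y^j$, which is exactly where non-degeneracy of the critical submanifold enters), and then conclude by combining Theorem~\ref{th_tubularnbhd} with Proposition~\ref{prop_eulerhomogeneous}. The only cosmetic differences are that the paper writes the local field multiplicatively as $X=\sum_{i,j}(AB^{-1})_{ij}(y)\,y^i\partial_{y^j}$ rather than as $\mathcal E_0$ plus an $O(|y|^2)$ correction, and globalises by assembling the fibre-wise fields directly into $Y(x,y)=X_x(y)$ rather than by a partition of unity; your convexity argument for the patching is, if anything, the more careful of the two.
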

        \begin{proof}
            Without loss of generality, take $M$ to be a tubular neighbourhood of $\Gamma$, so $M$ sits inside $\nu(M,\Gamma)$. Because $f$ is Morse-Bott, for each $x\in \Gamma$, the functions
                \[g_x \defeq f|_{\nu(M,\Gamma)_x \cap M}\colon \nu(M,\Gamma)_x\cap M \to \mathbb R\]
            are \emph{Morse} functions, each with a non-degenerate critical point at $0_x$, where the Hessian $Hg_x|_{0_x}$ is non-degenerate. In coordinates $y^i$ on the fibre $\nu(M,\Gamma)_x$ near $0_x$ (we suppress the subscript $x$),
                \[g(y) = \frac12\sum_{i,j} A_{ij}(y)y^iy^j,\]
            where $A(y) = (A_{ij}(y))$ is a symmetric matrix-valued function such that $A(0) = Hg|_{0_x}$. We compute:
                \[\frac{\partial g}{\partial y^k}
                    = \left(\frac12\sum_{i,j} y^iy^j\frac{\partial A_{ij}}{\partial y^k}\right) +\sum_{i}A_{ik}y^i = \sum_i B_{ki}y^i,\]
            here $B_{ki} = A_{ik} + \frac12\sum_j\frac{\partial A_{ij}}{\partial y^k}y^j$. Note that $Hg|_{0_x} = A(0) = B(0)$, and so the matrix-valued function $B$ is invertible in a neighbourhood of $y=0$. We will now construct an Euler-like vector field on $\nu(M,\Gamma)_x$, analogous to the construction in the proof of the Morse lemma in \cite{meinrenken2021euler}. Let $X$ be a vector field (implicitly depending on $x$, more accurately notated $X_x$) in this neighbourhood near zero by:
                \[X = \sum_{i,j}(AB^{-1})_{ij}(y)\,y^i\frac{\partial}{\partial y^j}.\]
             Since $A(0)B^{-1}(0) = I$, near zero $\nu(X) = \mathcal E$, and so $X$ is Euler-like. Then we have:
                \[X(g) = \sum_{i,j} (AB^{-1})_{ij}(y) y^i\frac{\partial g}{\partial y^j} = \sum_{i,j,k} (AB^{-1})_{ij}(y)B_{jk}(y)y^iy^k = \sum_{i,k}(AB^{-1}B)_{ik}(y)y^iy^k = 2g.\]
            Now define a vector field $Y$ on $M$ by $Y(x,y) = X_x(y)$. By construction, $Y$ is Euler-like and, by Theorem~\ref{th_tubularnbhd}, $Y$ determines a tubular neighbourhood embedding $\varphi\colon U\to M$ such that $\varphi^*Y = \mathcal E$. Note that for each $x\in \Gamma$, the vector field $X_x$ was defined to be tangent to the fibre at $x$, and so $Y$ only flows along the fibres, its flow being $\phi_t(x,y) = (x,\phi_t^{X_x}(y))$, where $\phi_t^{X_x}$ is the flow of $X_x$. We then can compute that:
                \[(\mathcal L_Yf)(x,y) = \left.\frac{d}{dt}\right|_{t=0} f(x,\phi_t^{X_x}(y)) = \left.\frac{d}{dt}\right|_{t=0}g_x(\phi_t^{X_x}(y)) = (\mathcal L_{X_x}g_x)(y) = 2g_x(y) = 2f(x,y),\]
            hence applying $\varphi^*$ to both sides yields $\mathcal L_{\mathcal E}(\varphi^*f) = 2\varphi^*f$. By Proposition~\ref{prop_eulerhomogeneous}, $\varphi^*f$ must be fibre-wise a homogeneous polynomial of degree 2.
        \end{proof}

        In our case, where the Morse-Bott functions associated with Morse-Bott volume forms necessarily have index 0 (i.e. correspond to positive-definite quadratic forms), we have a convenient normal form:

        \begin{theorem}
            If $f\colon M\to\mathbb R$ is Morse-Bott for $\Gamma\subset M$ of index 0 and $f|_\Gamma=0$, then there exist coordinates $(x,y)$ in the tubular neighbourhood $U$ of Theorem~\ref{th_MorseBottLemma} ($x$ parametrising $\Gamma$, and $y$ the fibres) such that:
                \[(\varphi^*f)(x,y) = |y|^2.\]
        \end{theorem}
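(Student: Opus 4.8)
The plan is to start from the conclusion of the Morse-Bott lemma (Theorem~\ref{th_MorseBottLemma}), which already tells us that $\varphi^*f$ is fibre-wise a homogeneous quadratic polynomial, and then to straighten this family of quadratic forms into the standard positive-definite form $|y|^2$ by a smooth, fibre-preserving linear change of the fibre coordinates.

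First I would fix a local trivialisation of the normal bundle $\nu(M,\Gamma)$ over a chart $V\subseteq\Gamma$, with $x$ parametrising $V$ and $y=(y^1,\dots,y^k)$ the fibre coordinates. Since $\varphi^*f$ is fibre-wise homogeneous of degree $2$ and smooth on the total space, it can be written as
\[
(\varphi^*f)(x,y) = \sum_{i,j} Q_{ij}(x)\,y^iy^j = y^{\mathsf T} Q(x)\,y,
\]
where $Q(x)=(Q_{ij}(x))$ is a smooth symmetric matrix-valued function, recovered (up to the factor $\tfrac12$) as the fibre-wise Hessian of $\varphi^*f$ along the zero section. Because $f$ has Morse-Bott index $0$, this Hessian is non-degenerate with no negative eigenvalues, so $Q(x)$ is positive definite for every $x\in V$.

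Next I would produce a smooth invertible matrix-valued function $L(x)$ with $L(x)^{\mathsf T}Q(x)L(x)=I$. The cleanest route is to apply the Gram--Schmidt process to the standard basis $e_1,\dots,e_k$ of the fibre with respect to the $x$-dependent inner product $\langle u,v\rangle_x \defeq u^{\mathsf T}Q(x)v$: this yields a basis orthonormal for $\langle\cdot,\cdot\rangle_x$, and the resulting change-of-basis matrix $L(x)$ is (upper-triangular and) smooth in $x$, since Gram--Schmidt uses only the smooth operations of forming inner products and dividing by the strictly positive norms that positive-definiteness guarantees. (Equivalently, one may take $L(x)=Q(x)^{-1/2}$ and invoke the smoothness of the positive-definite square root on the open cone of positive-definite symmetric matrices.) Setting $\tilde y \defeq L(x)^{-1}y$ defines a fibre-preserving, fibre-wise linear diffeomorphism of the trivialisation that fixes the zero section, under which
\[
y^{\mathsf T}Q(x)\,y = \bigl(L(x)^{-1}y\bigr)^{\mathsf T}\bigl(L(x)^{-1}y\bigr) = |\tilde y|^2 ,
\]
using $Q=(L^{-1})^{\mathsf T}L^{-1}$. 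Composing $\varphi$ with this bundle automorphism and renaming $\tilde y$ as $y$ then produces coordinates in which $\varphi^*f = |y|^2$.

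The main obstacle is the smoothness of the reduction in $x$, not its pointwise existence: for each fixed $x$ a positive-definite quadratic form is trivially diagonalisable, but the eigenvalues of $Q(x)$ may collide as $x$ varies, so a naive eigenvector diagonalisation need not depend smoothly on $x$. Working through Gram--Schmidt (or the positive-definite square root) sidesteps this precisely because it never requires a choice of eigenvectors and assembles $L(x)$ from manifestly smooth algebraic operations. The remaining points are then immediate: the change of coordinates is fibre-wise linear and fixes the zero section, hence it preserves the fibration and restricts to the identity on $\Gamma$, and since the statement asks only for coordinates $(x,y)$ on the tubular neighbourhood, this local construction in each trivialising chart is all that is required.
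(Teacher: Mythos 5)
Your proof is correct and follows the same underlying idea as the paper's (very terse) argument: start from the fibre-wise quadratic normal form of Theorem~\ref{th_MorseBottLemma}, observe that index $0$ makes the resulting symmetric matrix $Q(x)$ positive definite, and reduce it to the identity by a smooth fibre-wise \emph{linear} map $L$, the only real issue being smoothness in $x$. Where you differ is in how that smoothness is secured and at what level of globality: you work chart-by-chart in a trivialisation and build $L(x)$ algorithmically (Gram--Schmidt, or equivalently the smooth positive-definite square root $Q(x)^{-1/2}$), correctly noting that this sidesteps the non-smoothness of eigenvector diagonalisation; the paper instead phrases the reduction as the existence of a bundle metric on $\nu(M,\Gamma)$, glued globally by a partition of unity using the convexity of the cone of positive definite forms, which produces a single fibre-wise linear bundle automorphism $L$ over all of $\Gamma$ rather than one per chart. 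Since the conclusion only asks for coordinates $(x,y)$ --- which are in any case local when $\Gamma$ is not covered by one chart or the normal bundle is non-trivial --- your per-chart construction (equivalently, a local orthonormal frame for the bundle metric defined by $\varphi^*f$ itself) fully suffices; the paper's partition-of-unity gluing buys the slightly stronger global statement that $L\circ\varphi$ is defined on the whole tubular neighbourhood, which is what is actually used later in Corollary~\ref{cor_MorseBottindex0}. If you wanted that global version, you would either glue your local $L$'s (using convexity of the positive-definite cone to average) or simply take $L=Q^{-1/2}$ with respect to a globally chosen reference metric.
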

        \begin{proof}
            This theorem is equivalent to the existence of \emph{bundle metrics} (also known as \emph{Euclidean metrics}, see \cite{jost2008riemannian}) on the normal bundle. It is based on the partition of unity and the fact that the space of positive definite quadratic forms in $n$-variables is a convex cone. This allows one to combine the diffeomorphism $\varphi$ constructed in Theorem~\ref{th_MorseBottLemma} taking $f$ to its quadratic part with a  fibre-wise linear map $L$, so that the composition $L\circ \varphi$   takes $f$ to the fibre-wise quadratic function given by the length-squared in the fibre.
        \end{proof}
        
        \begin{corollary}\label{cor_MorseBottindex0}
            Suppose that $f_0$ and $f_1$ are Morse-Bott functions for $\Gamma\subset M$, both with a Morse-Bott index of 0. Then there exists a neighbourhood $U$ of $\Gamma$ and a diffeomorphism $\varphi$ defined on $U$ such that $\varphi^*f_1 = f_0$.
        \end{corollary}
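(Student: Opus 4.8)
The plan is to reduce everything to the normal form established in the theorem just above, and then compose. Since $f_0$ and $f_1$ are both Morse-Bott for $\Gamma$ of index $0$ and both vanish on $\Gamma$ (the normalisation $f_i|_\Gamma=0$ we have adopted — note that $f_0|_\Gamma=f_1|_\Gamma$ is in any case forced by the requirement $\varphi|_\Gamma=\id_\Gamma$), each satisfies the hypotheses of that normal-form theorem. Applying it produces, for $i=0,1$, a tubular neighbourhood embedding $\varphi_i\colon U_i\to M$ with $U_i\subseteq\nu(M,\Gamma)$ such that $\varphi_i^*f_i=|y|^2$ on $U_i$. The key point I would insist on is that both normal forms be taken with respect to the \emph{same} fixed bundle metric on $\nu(M,\Gamma)$: the convexity/partition-of-unity argument that produces the fibre-wise length-squared lets the target metric be prescribed in advance, so that the two right-hand sides $|y|^2$ denote literally the same function $q$ on $\nu(M,\Gamma)$.

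Granting this, the construction is immediate. Set $U_0\cap U_1$, a neighbourhood of the zero section, let $W\defeq\varphi_0(U_0\cap U_1)\subseteq M$ (this $W$ is the neighbourhood of $\Gamma$ asserted in the statement), and define
\[\varphi\defeq\varphi_1\circ\varphi_0^{-1}\colon W\to M.\]
This is a diffeomorphism onto a neighbourhood of $\Gamma$. For $p=\varphi_0(u)$ with $u\in U_0\cap U_1$ one computes
\[(\varphi^*f_1)(p)=f_1\bigl(\varphi_1(\varphi_0^{-1}(p))\bigr)=(\varphi_1^*f_1)(u)=q(u)=(\varphi_0^*f_0)(u)=f_0(\varphi_0(u))=f_0(p),\]
so $\varphi^*f_1=f_0$ on $W$. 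Since each $\varphi_i$ restricts to the identity on $\Gamma$ (identified with the zero section), so does $\varphi$, which gives the bonus conclusion $\varphi|_\Gamma=\id_\Gamma$ for free.

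The only genuine obstacle is the metric-matching issue flagged above: a priori the theorem yields $\varphi_i^*f_i=|y|^2_{g_i}$ for two possibly different bundle metrics $g_0,g_1$ on $\nu(M,\Gamma)$, in which case the middle equality $q(u)=q(u)$ breaks down. I would resolve this either by running the convexity construction against one common reference metric from the outset, or, if one prefers to treat the two applications of the theorem as black boxes, by inserting a fibre-wise linear bundle automorphism $T$ of $\nu(M,\Gamma)$ with $T^*(|y|^2_{g_1})=|y|^2_{g_0}$ — such a $T$ exists because any two bundle metrics differ by a bundle automorphism via fibre-wise polar decomposition — and taking $\varphi\defeq\varphi_1\circ T\circ\varphi_0^{-1}$ instead. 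As $T$ is linear on fibres it fixes the zero section, so the identity condition $\varphi|_\Gamma=\id_\Gamma$ is unaffected.
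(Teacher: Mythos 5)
Your proposal is correct and is exactly the argument the paper intends: the corollary is stated without proof precisely because it follows by composing the two normalizing tubular neighbourhood embeddings $\varphi_0,\varphi_1$ from the preceding normal-form theorem, each taking $f_i$ to the fibre-wise length-squared. Your care about taking both normal forms with respect to a single fixed bundle metric (or correcting by a fibre-wise linear automorphism) is a legitimate detail that the paper leaves implicit, and you handle it correctly.
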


        The analogue of the corollary for a maximal Morse-Bott index $k = \text{codim}(\Gamma)$ follows similarly. A generalization of this result for any index is proven for fibre-wise quadratic functions on general vector bundles in \cite{dossena2013sylvester} subject to the constraint that the positive- and negative-definite parts of $f_0$ and $f_1$ give the same splittings of the vector bundle.

        \begin{example}
            Note that the existence of the universal semi-global normal form in Corollary~\ref{cor_MorseBottindex0} for Morse-Bott functions of index 0 is based on the contractibility of the cone of symmetric positive definite matrices, allowing one to connect any two such functions in a tubular neighborhood of their critical set and apply Moser's trick (discussed in more detail in the next section in the context of forms). Morse-Bott functions of non-max/minimal indices might be non-isotopic, as can be seen in the following example.
            
            The space of non-degenerate quadratic forms $ax^2 + 2bxy + cy^2$ in two variables is split into three components by the double cone $b^2 - ac = 0$ in the 3D space $(a,b,c)\in\mathbb R^3$, see Figure~\ref{fig_cones}. This allows the following simple construction of a pair of non-isotopic Morse-Bott functions, as they realize contractible and non-contractible loops in the set of forms of index~1.

            Let $M=S^2\times S^1\subset \mathbb R^3 \times S^1$ be a 3-manifold regarded as a bundle over $S^1 = \{\theta \mod 2\pi\}$, where $S^2 = \{(x,y,z)\in\mathbb R^3 \mid x^2 + y^2 + z^2 = 1\}$.  As one of the functions one can take the restriction $g|_{S^2\times S^1}$ of the function $g(x,y, z, \theta) = x^2 - y^2$ from $\mathbb R^3\times S^1$ to $M$, independent of the variables $z$ and $\theta$. Its critical set $\Gamma \defeq \{(x,y,z)\in\mathbb R^3\mid x=y=0, z=\pm 1\}\subset M$ consists of two (north and south) circles,  both having the Morse-Bott index 1.
      
            The other function is also defined by the restriction $f|_{S^2\times S^1}$ of a fibre-wise quadratic $f$ in $\mathbb R^3 \times S^1$, where: 
                \[f(x,y,z,\theta) =  (x^2 - y^2)\cos\theta + 2xy\sin\theta.\]
            For each $\theta$, the restriction of $f(\cdot,\theta)$ to the sphere has non-degenerate critical points of index 1 at the north and south poles of the fibre $S^2$ and it defines a non-contractible loop in the space of quadratic forms, as illustrated in Figure \ref{fig_cones}. As a result, there is no isotopy between $f$ and $g$.
        \end{example}

            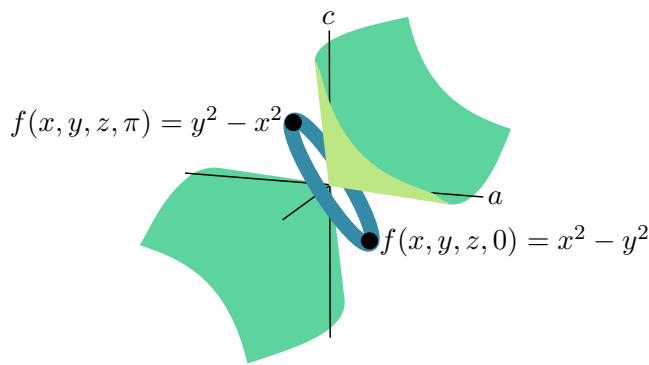
\begin{figure}[ht]
                \centering
                
                \begin{tikzpicture}[scale=0.25]
                    \begin{scope}[shift={(-0.3146, 0.8614)}]
                        \path[draw=accent1,fill=accent1,line width=0.0265cm] (11.0171, 17.2961) -- (11.749, 11.1231).. controls (11.7808, 9.3934) and (8.774, 8.5907) .. (6.6642, 7.8776).. controls (5.6201, 12.2251) and (3.3664, 13.3546) .. (0.9823, 14.1374).. controls (1.7058, 15.4294) and (2.2721, 16.7777) .. (3.6788, 17.8258).. controls (3.8329, 17.9312) and (4.4106, 18.2778) .. (5.1619, 18.1628) -- cycle;
                
                        \path[draw=accent1,fill=accent1,nonzero rule,line width=0.0265cm] (10.4529, 23.2989).. controls (11.6008, 18.3503) and (14.482, 17.5948) .. (16.316, 16.8637).. controls (18.6616, 14.8229) and (20.1244, 19.1923) .. (20.5721, 20.2618).. controls (18.2387, 21.033) and (16.0075, 22.2272) .. (15.1311, 26.193).. controls (13.5953, 25.8648) and (9.3065, 25.0961) .. (10.4529, 23.2989) -- cycle;

                        \begin{scope}[line width=0.02cm]
                            \path[draw=black] (3.3054, 17.946) -- (19.1729, 16.6734);
                            \path[draw=black] (11.0268, 9.1915) -- (10.9803, 25.5386);
                            \path[draw=black] (8.494, 15.4567) -- (11.0171, 17.2961);
                        \end{scope}
                    
                        \path[draw=accent2,line width=0.2cm,rotate around={-57.9109:(0.0, 29.7)}] (16.2619, 32.6062) ellipse (3.8679cm and 0.7763cm);
                        \path[draw=accent3,fill=accent3,line width=0.0265cm] (10.2575, 23.8515) -- (11.0075, 17.3154) -- (17.2672, 16.3427).. controls (16.8265, 16.4355) and (16.6896, 16.7674) .. (16.3137, 16.8884).. controls (13.2382, 17.8788) and (11.4733, 19.3011) .. (10.4483, 23.2943).. controls (10.4163, 23.4189) and (10.3034, 23.5614) .. (10.2575, 23.8515) -- cycle;
                    \end{scope}

                    \node at (19.5,17.5){$a$};
                    \node at (10.6,27.1){$c$};

                    \fill (12.8,15.2) circle (13pt) node[right]{$f(x,y,z,0) = x^2 - y^2$};
                    \fill (8.75,21.5) circle (13pt) node[left]{$f(x,y,z,\pi) = y^2 - x^2$};
                \end{tikzpicture}
                
                \caption{Example of a non-trivial loop in the space of quadratic forms $ax^2 + 2bxy + cy^2$, as parametrized by $(a,b,c)$.}\label{fig_cones}
            \end{figure}

        \section{Proofs of main results}

        In this section, we will prove Theorems~\ref{th_morsebottmoser} and \ref{th_cardonamirandamoser} with their corollaries.

        \subsection*{Proof of Theorem~\ref{th_morsebottmoser} on Morse-Bott volume forms}\label{pf_morsebottmoser}



        Assume that $\eta_0$ and $\eta_1$ are Morse-Bott volume forms for $\Gamma\subset M$ such that their relative cohomology classes with respect to $\Gamma$ coincide, $[\eta_0] = [\eta_1]\in H^n(M,\Gamma)$. We will show that there exists a diffeomorphism $\Phi\colon M\to M$ such that $\Phi^*\eta_1 = \eta_0$ which restricts to the identity on $\Gamma$.

        \begin{proof}
            First consider the local problem. Let $N$ be a tubular neighbourhood of $\Gamma$, which we identify with a neighbourhood in the normal bundle, $\nu(M,\Gamma)\subseteq M$. Two Morse-Bott volume forms in $N$ can be expressed as $\rho_0 \defeq f\mu$ to $\rho_1 \defeq h\mu$, where $f$ and $h$ are Morse-Bott functions having $\Gamma$ as a non-degenerate minimum (i.e. a critical submanifold of index 0) and $\mu$ is a reference (non-vanishing) volume form on $N\subset M$. The forms $\rho_i$ can be thought of as the restrictions $\rho_i \defeq \eta_i|_N$ of globally defined Morse-Bott forms $\eta_i$ to the neighbourhood $N$.
        
            By Corollary~{\ref{cor_MorseBottindex0}}, there exists a diffeomorphism $F$ (of a possibly smaller neighbourhood of $\Gamma$) taking $h$ to $f$, but changing the reference form $\mu$. So without loss of generality we assume that, after application of $F\colon N\to N$ the Morse-Bott volume forms are $\rho_0 = f\mu$ and $\rho_1 = f\phi\,\mu$ for some non-vanishing function $\phi\in C^\infty(N)$. Next, the function $f$ can be assumed fibre-wise quadratic in $N\subset \nu(M,\Gamma)$ by Theorem~\ref{th_MorseBottLemma}. We are looking for a diffeomorphism of $N$ pulling back $\rho_1$ to $\rho_0$ while remaining the identity on $\Gamma$. 
        
            To apply Moser's trick, we consider the interpolation $\rho_t \defeq (1-t)\rho_0 + t\rho_1$ of these forms and seek a family of diffeomorphisms $(\psi_t)_{t\in[0,1]}$ such that $\psi_t^*\rho_t = \rho_0$. Applying $\psi_{t*}\frac{d}{dt}$ to this relation, we get that:
                \[\mathcal L_{X_t}\rho_t +\dot\rho_t =0,\]
            where $(X_t)_{t\in[0,1]}$ is the time-dependent vector field whose flow is $(\psi_t)_{t\in[0,1]}$, and $\dot\rho_t=\rho_1 - \rho_0$. We will show that there exists such a smooth vector field $X_t$ vanishing on $\Gamma$ for $t\in [0,1]$.
            
            Let $\mathcal E$ be the Euler vector field to $\Gamma$, defined on $N\subset \nu(M,\Gamma)$, and let $g_s$ be the flow of $-\mathcal E$ towards $\Gamma$. The following expression for a primitive for $\dot\rho_t$ is similar to the one in the proof of the Poincar\'e lemma:
                \begin{align*}
                    \dot\rho_t
                    &= -\int_0^\infty \frac{d}{ds}g_s^*(\dot\rho_t)\,ds
                    = -\int_0^\infty g_s^*\left(-\mathcal L_{\mathcal E}\dot\rho_t\right)\,ds \\
                    &= \int_0^\infty g_s^*\left(d\,\iota_{\mathcal E}\dot\rho_t + \cancel{\iota_{\mathcal E}d\dot\rho_t}\right)\,ds 
                    = \int_0^\infty d\,\iota_{g_s^*\mathcal E}(g_s^*\dot\rho_t)\,ds. 
                \end{align*}
            Now note that $g_s^*f = fe^{-2s}$ since $f$ is fibre-wise quadratic, while $\dot\rho_t = \rho_1 - \rho_0 = f(\phi\mu - \mu)$, hence:
                \begin{align*}
                    \dot\rho_t = \int_0^\infty d\left(\iota_{g_s^*\mathcal E}(fe^{-2s}g_s^*(\phi\mu - \mu))\right)\,ds
                    = d\,[f\underbrace{\int_0^\infty\iota_{g_s^*\mathcal E}(e^{-2s}g_s^*(\phi\mu - \mu))\,ds}_{-\beta}],
                \end{align*}
            and therefore, locally near $\Gamma$, one has $-\dot\rho_t = d(f\beta)$ for some ($n-1$)-form $\beta$ on $N$. Note that for all $p\in\Gamma$, we have that $g_s^*\mathcal E|_p = 0$, and so $\beta|_\Gamma = 0$. 

            On the other hand, $\mathcal L_{X_t}\rho_t= d\,\iota_{X_t}\rho_t$, while $\rho_t = f((1-t)\mu + t\phi\mu)$. Hence to solve the equation $\mathcal L_{X_t}\rho_t +\dot\rho_t =0$ for the field $X_t$ or, equivalently, $d\,\iota_{X_t}\rho_t = d(f\beta)$, it suffices to solve: 
                \[f\,\iota_{X_t}((1-t)\mu + t\phi\mu) = f\beta.\]
            This amounts to solving the equation $\iota_{X_t}((1-t)\mu + t\phi\mu) = \beta$ for a family of vector fields $X_t$ on $N$. Note that the volume form $(1-t)\mu + t\phi\mu$ interpolates between $\mu$ and $\phi\mu$ and hence it is non-vanishing for all $t\in [0,1]$. Hence the field $X_t$ solving $\iota_{X_t}((1-t)\mu + t\phi\mu) = \beta$ exists on $N\setminus\Gamma$, and it is smooth and must vanish on $\Gamma$. Note also that due to this vanishing condition, solutions starting sufficiently close to $\Gamma$ exist for the whole interval $t\in [0,1]$. Hence the time-1 map of the flow $\psi_t$ corresponding to the vector field $X_t$ provides the required diffeomorphism of some neighbourhood of $\Gamma$. Without loss of generality, we can assume that this is the neighbourhood $N\supset \Gamma$, and this completes the proof of the local statement.

            To prove the existence of a smooth  globally-defined field on $M$ whose flow takes $\eta_1$ to $\eta_0$, we first  (smoothly and arbitrarily) extend the field $X_t$ from $N$ to the whole of $M$. Now consider a smaller tubular neighbourhood $U$ of $\Gamma$, sitting compactly within $N$, $\Gamma\subset U\subset \overline U\subset N\subset M$, and pick a bump function $b\colon M\to[0,1]$ which is $1$ on $U$ and $0$ on $M\setminus N$. This allows one to define the time-dependent vector field $Y_t \defeq bX_t$ on $M$ whose time-1 flow map $G\colon M\to M$ satisfies $G^*\eta_1|_U = \eta_0|_U$ and $G|_\Gamma = \id_\Gamma$.

            Consider the pull-back action of the map $G$ on the Morse-Bott form $\eta_1$: it is a new form $\zeta_1 \defeq G^*\eta_1$ which coincides with $\eta_0$ in the neighbourhood $U\supset \Gamma$, but outside of $U$, the form $\zeta_1$ is only known to be non-vanishing and, by assumption, representing the same relative cohomology class in $H^n(M,\Gamma)$ as the form $\eta_0$.

            We will now apply Moser's method again to find a diffeomorphism mapping $\zeta_1$ to $\eta_0$ everywhere on $M$. For this we  consider the  interpolation $\zeta_t \defeq (1-t)\eta_0 + t\zeta_1$ between them, joining $\zeta_0 \defeq \eta_0$ and $\zeta_1$. Note that all these forms coincide in the tubular neighbourhood $U$, $\zeta_t|_U=\zeta_0|_U$ for all $t\in [0,1]$. We will be seeking a family of diffeomorphisms $(\varphi_t)_{t\in[0,1]}$ such that $\varphi_t^*\zeta_t = \zeta_0$. Applying $\varphi_{t*}\frac{d}{dt}$ to this relation, we get that:
                \[ \mathcal L_{Z_t}\zeta_t = \zeta_0 - \zeta_1,\]
            where $(Z_t)_{t\in[0,1]}$ is the time-dependent vector field whose flow is $(\varphi_t)_{t\in[0,1]}$. Note that $\zeta_0$ and $\zeta_1$ represent the same class in $H^n(M,\Gamma)$ and $(\zeta_0 - \zeta_1)|_U = 0$. We wish to find a primitive $(n-1)$-form for $\zeta_0 - \zeta_1$ which is zero on $U$. This can be done in a number of ways, cf. \cite{Avila2010, Bruveris2016MosersTO, cardona2019volume, csato2011pullback}, for instance via the following consideration.

            Since $\Gamma$ is a deformation retract of its tubular neighbourhood $U$, the forms $\zeta_0$ and $\zeta_1$ represent the same relative cohomology class in $H^n(M,U)=H^n(M,\Gamma)$. By the definition of relative cohomology, there exists a $\omega\in \Omega^{n-1}(M)$ and $\theta\in\Omega^{n-2}(U)$ such that:
                \[\zeta_0 - \zeta_1 = d\omega,\quad i^*\omega = d\theta,\]
            for the inclusion $i\colon U\hookrightarrow M$. Pick a bump function $\tilde b\colon M\to[0,1]$ equal to $1$ on a smaller tubular neighbourhood $\tilde U$ compactly contained in $U$ and equal to $0$ on $M\setminus U$. Then define:      
                \[\tilde\omega \defeq \omega - d(\tilde{b}\theta) \in \Omega^{n-1}(M),\]
            where $d\tilde\omega = d\omega = \zeta_0 - \zeta_1$ and $\tilde\omega|_{\tilde U} = 0$. To complete Moser's trick, we now want to solve the equation  $\iota_{Z_t}\zeta_t = \tilde\omega$ for an unknown vector field $Z_t$. The Morse-Bott form $\zeta_t|_{M\setminus\Gamma}$ is a volume form for all $t\in[0,1]$ (with $\zeta_t|_U = \zeta_0|_U$) and so $Z_t$ has a unique solution on $M\setminus\Gamma$. Furthermore, the solution $Z_t$ vanishes on the tubular neighbourhood $\tilde U\supset \Gamma$, since $\tilde\omega|_{\tilde U} = 0$. This allows one to define $Z_t$ on the whole manifold $M$ (extending it by zero to $\Gamma$ itself). The corresponding flow of $Z_t$ on the compact manifold $M$ exists for $t\in [0,1]$, and it is the identity on $\tilde U\supset \Gamma$.
            
            Thus Moser's trick yields that the time-1 flow map $H\colon M\to M$ is a diffeomorphism satisfying $H^*\zeta_1 = \zeta_0$ and $H|_\Gamma = \id_\Gamma$. Finally, we define the diffeomorphism $\Phi$ of $M$ as the composition $\Phi \defeq H\circ G\circ F$, where $F$ (extended to $M$) is from the Morse-Bott lemma (Corollary~{\ref{cor_MorseBottindex0}}), $G$ identifies the Morse-Bott volume forms in a neighbourhood of $\Gamma$, and $H$ relates the forms outside of a neighbourhood of $\Gamma$ while keeping fixed the neighbourhood itself. The composition satisfies $\Phi^*\eta_1 = \eta_0$ and $\Phi|_{\Gamma} = \id_\Gamma$, as desired.
        \end{proof}

        Note that the last part of the proof boils down to construction of a vector field with a prescribed divergence, while controlling its support outside of a neighbourhood of the critical set. This topic has a long history. In the $C^\infty$-case for manifolds it was considered in \cite{Avila2010}. The arguments above can be regarded as an extension of the boundary case in \cite{banyaga1974formes, Bruveris2016MosersTO}.

        We now turn to the corollaries of this theorem, which manifest differently depending on the codimension of the critical submanifold $\Gamma$.
        
    \renewcommand{\thetheorem}{1.3}
        \begin{corollary}
            If the shared zero submanifold $\Gamma\subset M$ of two Morse-Bott volume forms $\eta_0$ and $\eta_1$ on $M$ is of codimension at least 2, the forms are diffeomorphic, $\Phi^*\eta_1 = \eta_0$ with $\Phi|_\Gamma = \id_\Gamma$, if and only if they have equal total volumes of $M$,
                \[\int_M\eta_0 = \int_M\eta_1.\] 
        \end{corollary}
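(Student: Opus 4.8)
The plan is to derive this corollary directly from Theorem~\ref{th_morsebottmoser} by showing that, once $\operatorname{codim}\Gamma\ge 2$, the relative cohomological hypothesis $[\eta_0]=[\eta_1]\in H^n(M,\Gamma)$ is \emph{equivalent} to the equality of total volumes. The necessity (``only if'') direction is immediate and requires no hypothesis on the codimension: any diffeomorphism $\Phi$ of the connected oriented manifold $M$ satisfies $\int_M\Phi^*\eta_1=\pm\int_M\eta_1$, and since $\eta_0=\Phi^*\eta_1$ with $\eta_0,\eta_1$ both non-negative of positive total integral, the sign must be $+$, giving $\int_M\eta_0=\int_M\eta_1$.

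For sufficiency (``if'') I would compute $H^n(M,\Gamma)$ via the long exact sequence of the pair $(M,\Gamma)$ in de Rham cohomology,
    \[\cdots \longrightarrow H^{n-1}(\Gamma) \longrightarrow H^n(M,\Gamma) \longrightarrow H^n(M) \longrightarrow H^n(\Gamma) \longrightarrow \cdots.\]
Since each component of $\Gamma$ has codimension at least $2$, we have $\dim\Gamma\le n-2$, so there are no nonzero forms on $\Gamma$ of degree $n-1$ or $n$; hence $H^{n-1}(\Gamma)=H^n(\Gamma)=0$ for purely dimensional reasons, and this is precisely where the codimension assumption is indispensable. The exact sequence then collapses to an isomorphism $H^n(M,\Gamma)\xrightarrow{\ \sim\ }H^n(M)$ induced by forgetting the relative structure. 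Because $M$ is compact, connected, and oriented, integration is an isomorphism $H^n(M)\xrightarrow{\ \sim\ }\mathbb R$, and the composite sends each $[\eta_i]$ to $\int_M\eta_i$.

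Consequently $[\eta_0]=[\eta_1]\in H^n(M,\Gamma)$ holds if and only if $\int_M\eta_0=\int_M\eta_1$, so the volume hypothesis places the two forms in the same relative class; applying Theorem~\ref{th_morsebottmoser} then yields the diffeomorphism $\Phi$ with $\Phi^*\eta_1=\eta_0$ and $\Phi|_\Gamma=\id_\Gamma$. I do not expect any genuine obstacle beyond bookkeeping: the one point deserving care is that the dimension count giving $H^{n-1}(\Gamma)=0$ must be made for \emph{all} components of $\Gamma$ at once (which is automatic, as each has dimension $\le n-2$). It is worth noting that this vanishing is exactly what fails when $\operatorname{codim}\Gamma=1$, where $H^{n-1}(\Gamma)$ contributes and the relative class must instead be detected componentwise on $M\setminus\Gamma$, as recorded in the subsequent codimension-$1$ corollary.
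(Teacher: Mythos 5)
Your proposal is correct and follows essentially the same route as the paper: the long exact sequence of the pair $(M,\Gamma)$ together with the dimensional vanishing $H^{n-1}(\Gamma)=H^n(\Gamma)=0$ gives $H^n(M,\Gamma)\cong H^n(M)\cong\mathbb R$ via integration, reducing the corollary to Theorem~\ref{th_morsebottmoser}. The only addition beyond the paper's write-up is your explicit treatment of the ``only if'' direction and the sign of $\int_M\Phi^*\eta_1$, both of which are fine.
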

        \begin{proof}
            If $\Gamma\subset M$ has codimension $k\geq 2$, the condition of $[\eta_0] = [\eta_1]\in H^n(M,\Gamma)$ is equivalent to the condition that they have equal total volumes. This can be seen from the long exact sequence for relative cohomology:
                \[\cdots\to H^{n-1}(\Gamma) \to H^n(M,\Gamma)\to H^n(M)\to H^n(\Gamma)\to 0,\]
            where the constraint on the codimension implies $H^{n-1}(\Gamma) = H^n(\Gamma) = 0$. This implies an isomorphism $H^n(M,\Gamma) \cong H^n(M)$ by exactness of the sequence:
                \[0\to H^n(M,\Gamma) \to H^n(M)\to 0.\]
        \end{proof}

    \renewcommand{\thetheorem}{1.4}
        \begin{corollary}
            If the shared zero submanifold $\Gamma\subset M$ has codimension 1, two Morse-Bott volume forms $\eta_0$ and $\eta_1$ are diffeomorphic, $\Phi^*\eta_1 = \eta_0$ with $\Phi|_\Gamma = \id_\Gamma$, if and only if they have coinciding volumes for each connected component $M_i$ of $M\setminus\Gamma$: 
                \[\int_{M_i}\eta_0 = \int_{M_i}\eta_1 \quad\text{ for all }i.\]
        \end{corollary}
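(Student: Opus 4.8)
The plan is to deduce this from Theorem~\ref{th_morsebottmoser}: since that theorem already characterizes diffeomorphism equivalence of Morse-Bott volume forms by the condition $[\eta_0]=[\eta_1]\in H^n(M,\Gamma)$, it suffices to prove that, when $\Gamma$ is a hypersurface, this relative-cohomology condition is equivalent to the system of equalities $\int_{M_i}\eta_0=\int_{M_i}\eta_1$ over all connected components $M_i$ of $M\setminus\Gamma$. I would first dispatch the elementary direction. Suppose $\Phi^*\eta_1=\eta_0$ with $\Phi|_\Gamma=\id_\Gamma$. Since $\eta_0,\eta_1$ are strictly positive off $\Gamma$, both induce the orientation of the reference form $\mu$ there, so $\Phi^*\eta_1=\eta_0$ forces $\Phi^*\mu$ to be a positive multiple of $\mu$ away from $\Gamma$; hence $\Phi$ is orientation-preserving. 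Because $\Phi$ fixes the two-sided hypersurface $\Gamma$ pointwise, $d\Phi_p$ is the identity on $T_p\Gamma$ and multiplies the normal line by a scalar $\lambda(p)$ with $\det d\Phi_p=\lambda(p)$; orientation-preservation gives $\lambda(p)>0$, so $\Phi$ preserves the two local sides of $\Gamma$ and therefore maps each component $M_i$ onto itself. Change of variables then yields $\int_{M_i}\eta_0=\int_{M_i}\Phi^*\eta_1=\int_{M_i}\eta_1$ for every $i$, proving the ``only if'' direction directly (without even invoking Theorem~\ref{th_morsebottmoser}).

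For the converse I would show that integration over the components is a complete invariant of the relative class. Consider the linear map
\[ I\colon H^n(M,\Gamma)\to\mathbb R^m,\qquad I([\eta])=\Big(\textstyle\int_{M_i}\eta\Big)_{i=1}^m, \]
where $m$ is the number of components of $M\setminus\Gamma$. This is well defined: a top-degree form restricts to zero on the lower-dimensional $\Gamma$, and if two representatives differ by a relative coboundary $d\omega$ with $i^*\omega=d\theta$ exact on $\Gamma$, then $\int_{M_i}d\omega=\int_{\partial\overline{M_i}}\omega=\int_{\partial\overline{M_i}}d\theta=0$ by Stokes, since $\partial\overline{M_i}\subseteq\Gamma$ is closed. (Note this is exactly where the \emph{relative} structure, rather than absolute exactness, is needed.) The crux is that $I$ is injective—indeed an isomorphism. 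Granting this, the hypothesis $\int_{M_i}\eta_0=\int_{M_i}\eta_1$ for all $i$ gives $I([\eta_0-\eta_1])=0$, hence $[\eta_0]=[\eta_1]\in H^n(M,\Gamma)$, and Theorem~\ref{th_morsebottmoser} supplies the desired $\Phi$.

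To establish injectivity of $I$ I would invoke Lefschetz--Poincar\'e duality. Writing $W$ for the complement of an open tubular neighbourhood of $\Gamma$, $W$ is a compact oriented $n$-manifold with boundary, homotopy equivalent to $M\setminus\Gamma$, and excision together with the deformation retraction of the tubular neighbourhood onto $\Gamma$ identifies $H^n(M,\Gamma)\cong H^n(W,\partial W)$. Lefschetz duality then gives $H^n(W,\partial W)\cong H_0(W)\cong H_0(M\setminus\Gamma)\cong\mathbb R^m$, and under the de Rham pairing this isomorphism is realized precisely by integration against the fundamental classes of the closures $\overline{M_i}$, i.e.\ by $I$ itself. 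Equivalently, one checks that $H_n(M,\Gamma;\mathbb R)$ is spanned by the relative fundamental cycles $[\overline{M_i}]$, which pair with $[\eta]$ to give $\int_{M_i}\eta$; since the de Rham pairing $H^n(M,\Gamma;\mathbb R)\times H_n(M,\Gamma;\mathbb R)\to\mathbb R$ is perfect, $I$ is injective. The main obstacle I anticipate is making the duality and the identification of the generators fully rigorous—in particular handling the non-separating case, where $M\setminus\Gamma$ may be connected (so $m=1$ and $\overline{M_i}=M$) and $\overline{M_i}$ need not be an embedded manifold-with-boundary, and keeping careful track of orientations so that the relative cycles pair with $\int_{M_i}$ with the correct signs.
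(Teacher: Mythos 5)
Your proposal is correct, and it is in fact more complete than the paper's own argument, which is quite terse. The paper reduces the corollary to the equivalence ``$[\eta_0]=[\eta_1]\in H^n(M,\Gamma)$ iff the component volumes agree'' and then explicitly verifies only one implication: that a relative coboundary integrates to zero over each $M_i$ by Stokes' theorem (exactly your well-definedness computation for the map $I$). The converse implication --- equal component volumes force equality of the relative classes, which is the direction actually needed to feed into Theorem~\ref{th_morsebottmoser} and produce $\Phi$ --- is asserted but not argued in the paper; you supply it via Lefschetz--Poincar\'e duality, identifying $H^n(M,\Gamma)\cong H^n(W,\partial W)\cong H_0(M\setminus\Gamma)\cong\mathbb R^m$ so that the surjective map $I$ between spaces of equal finite dimension is an isomorphism. (A slightly lighter way to finish, avoiding the delicate identification of generators you worry about at the end, is to note that $I$ is visibly surjective --- bump forms compactly supported in each $M_i$ define relative classes hitting the standard basis --- and that $\dim H^n(M,\Gamma)=m$ by the duality computation, so injectivity is automatic; this sidesteps having to realize the duality pairing by the non-manifold closures $\overline{M_i}$ in the non-separating case.) You also prove the ``only if'' direction of the corollary directly, checking that $\Phi$ is orientation-preserving and, because it fixes the two-sided $\Gamma$ pointwise with $\det d\Phi>0$, preserves each local side and hence each component $M_i$; the paper leaves this entirely implicit. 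So your route costs a duality argument the paper omits, but it buys an actually complete proof of both implications.
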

        \begin{proof}
            If $\Gamma\subset M$ has codimension $k=1$, one has only a surjection $H^n(M,\Gamma)\to H^n(M)$ from $H^n(\Gamma) = 0$. In general, it is not necessarily an isomorphism, due to the possible disconnectedness of $M\setminus\Gamma$. If $M\setminus\Gamma$ consists of several connected components, $M\setminus\Gamma = \bigsqcup_{i\in I} M_i$, the forms $\eta_0$ and $\eta_1$ represent the same cohomology class relative to $\Gamma$ if and only they have equal volumes on each component $M_i$. Indeed, by definition of relative de Rham cohomology, the forms $\eta_0$ and $\eta_1$ represent the same cohomology class relative to $\Gamma$ whenever their difference is exact, $d\omega = \eta_1 - \eta_0$ for some $\omega\in\Omega^{n-1}(M)$. Hence we have:
                \[\int_{M_i}(\eta_1-\eta_0) = \int_{M_i}d\omega = \int_{\partial M_i}\omega = 0\,,\]
            where the last equality follows from the fact that $\omega$ is exact on $\partial M_i\subset\Gamma$.
        \end{proof}

    \subsection*{Non-critical zero sets}

    \renewcommand{\thetheorem}{\thesection.\arabic{theorem}}
    \setcounter{theorem}{0}
    
        A \key{folded volume form} on an oriented $n$-dimensional manifold $M$ is a top-degree form $\eta\in \Omega^n(M)$ which is transverse to the zero section of the determinant bundle $\Lambda^nT^*M$. Here we outline how the following strengthening of the result of Cardona and Miranda~\cite{cardona2019volume} on the equivalence of two folded volume forms with the same zero sets
        can be proven using a similar strategy through Euler-like vector fields.
        
        Note that the zero set $\Gamma\subset M$ of a folded volume form $\eta$ is an oriented hypersurface $\Gamma$ (possibly consisting of several components). By fixing a reference volume form, $\mu$, the hypersurface $\Gamma$ has a \key{defining function} $\eta/\mu \defeq f\colon M\to\mathbb R$ satisfying $\Gamma = f^{-1}(0)$ and $df|_x \neq 0$ for each $x\in \Gamma$. We have the following analogue of the Morse-Bott lemma (Theorem~\ref{th_MorseBottLemma}):
        
        \begin{lemma}\label{lem_definingfunctionhomdeg1}
            If $f\colon M\to\mathbb R$ is a defining function for a hypersurface $\Gamma$, then there exists a tubular neighbourhood embedding $\varphi\colon U\to M$ (with $U\subseteq\nu(M,\Gamma)$) such that $\varphi^*f$ is fibre-wise linear.
        \end{lemma}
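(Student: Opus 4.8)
The plan is to mirror the proof of the Morse-Bott lemma (Theorem~\ref{th_MorseBottLemma}) almost verbatim, replacing the fibre-wise quadratic model with a fibre-wise linear one. As there, I would first reduce to a semi-global problem on the normal bundle: identify a tubular neighbourhood of $\Gamma$ with a neighbourhood of the zero section in $\nu(M,\Gamma)$, which is now a \emph{line} bundle since $\Gamma$ has codimension $1$. Fixing a fibre coordinate $y$, write $g_x \defeq f|_{\nu(M,\Gamma)_x}$ for the restriction of $f$ to the fibre over $x\in\Gamma$. Because $f$ is a defining function, $g_x(0)=0$ while $g_x'(0) = \partial g_x/\partial y|_0 \neq 0$; this is precisely the transversality condition $df|_x\neq 0$ read off in the normal direction.

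The key step is to build a fibre-wise Euler-like vector field that detects degree-$1$ homogeneity, in analogy with the field of Theorem~\ref{th_MorseBottLemma} satisfying $X(g)=2g$. Here I would take
\[
    X \defeq \frac{g}{g'}\,\frac{\partial}{\partial y},
\]
which is smooth and well-defined near $y=0$ since $g'(0)\neq 0$. By construction $X(g)=g$, and writing $g(y)=y\,h(y)$ with $h(0)=g'(0)\neq 0$ (Hadamard's lemma), one sees that the coefficient $g/g' = y\big(1+O(y)\big)$ vanishes at $y=0$ with derivative $1$ there; hence $\nu(X)=\mathcal E$ and $X$ is Euler-like. Letting $x$ range smoothly over $\Gamma$, and noting that $g_x'(0)$ stays non-zero after shrinking the neighbourhood, these assemble into a vector field $Y$ on the total space, tangent to and flowing only along the fibres, which is Euler-like for $(M,\Gamma)$.

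With $Y$ in hand, the remainder is identical to the Morse-Bott case. Theorem~\ref{th_tubularnbhd} produces a tubular neighbourhood embedding $\varphi\colon U\to M$ with $\varphi^*Y=\mathcal E$. Since $Y$ flows along the fibres and $X_x(g_x)=g_x$ on each fibre, one computes $\mathcal L_Y f = f$, and pulling back gives $\mathcal L_{\mathcal E}(\varphi^*f)=\varphi^*f$. Applying Proposition~\ref{prop_eulerhomogeneous} with $k=1$ then forces $\varphi^*f$ to be fibre-wise a homogeneous polynomial of degree $1$, i.e. fibre-wise linear, as claimed.

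I would expect no serious obstacle: the codimension-$1$ setting is in fact cleaner than the Morse-Bott case, as the fibre is one-dimensional and no matrix inversion (the role played by $B^{-1}$ in Theorem~\ref{th_MorseBottLemma}) is required — division by the scalar $g'$ suffices. The only points needing care are the smooth dependence of $X$ on the base parameter $x\in\Gamma$ and the shrinking of the neighbourhood so that $g_x'$ remains non-vanishing throughout, both of which follow from the non-vanishing of $df$ along the compact hypersurface $\Gamma$.
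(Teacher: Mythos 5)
Your proof is correct and is essentially the paper's argument: the paper's one-line proof takes $f$ itself as the fibre coordinate via the inverse function theorem and observes that the Euler-like vector field becomes $f\,\partial/\partial f$, which is exactly your field $(g/g')\,\partial/\partial y$ written in that coordinate. Your version just spells out the Euler-like vector field machinery (Theorem~\ref{th_tubularnbhd} and Proposition~\ref{prop_eulerhomogeneous} with $k=1$) that the paper leaves implicit.
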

        \begin{proof}
            This is achieved via the inverse function theorem by taking $f$ as the coordinate in the fibres of the one-dimensional normal bundle. In a sense, this is Hadamard's lemma depending on the parameter $x\in \Gamma$. The Euler-like vector field of Theorem~\ref{th_MorseBottLemma} becomes 
            $X = f\frac{\partial}{\partial f}$ in this coordinate.
        \end{proof}

        Now we prove Theorem~\ref{th_cardonamirandamoser}, that two folded volume forms $\eta_0$ and $\eta_1$ on a compact oriented manifold $M$ with the same non-critical zero set $\Gamma\subset M$ are diffeomorphic, $\Phi^*\eta_1 = \eta_0$ with $ \Phi|_\Gamma = \id_\Gamma$, if and only if they represent the same relative cohomology classes $[\eta_0]=[\eta_1]\in H^n(M,\Gamma)$, or equivalently, they have coinciding volumes of each connected component $M_i$ of $M\setminus \Gamma$. The lemma above allows one to set up Moser's trick, much like how Corollary~\ref{cor_MorseBottindex0} was used in the proof of Theorem~\ref{th_morsebottmoser}.

        \begin{proof}
            Consider the defining functions $f_0$ and $f_1$ of $\Gamma$ corresponding to the folded volume forms $\eta_0$ and $\eta_1$ for a fixed volume form $\mu$, i.e. $f_i = \eta_i/\mu$. Hadamard's lemma guarantees the existence of a non-vanishing $\phi\in C^\infty(M)$ such that $\eta_1 = \phi\,\eta_0$.
        
            Taking a tubular neighbourhood $N\subset M$ of $\Gamma$ (and identifying it with a neighborhood in $\nu(M,\Gamma)$), one comes to  a setting analogous to the {proof} of Theorem~\ref{th_morsebottmoser}: here we are constructing a diffeomorphism equivalence between two folded volume forms $\eta_0 = f\mu$ and $\eta_1 =  f\phi\,\mu$ in the neighbourhood $N$, where $\mu$ is a volume form, $\phi$ is a non-vanishing function, and $f$ is a defining function for $\Gamma$. Without loss of generality, we can assume  $f$ to be fibre-wise linear (otherwise applying Lemma~\ref{lem_definingfunctionhomdeg1} to make it so). 

            Now we are looking for a diffeomorphism of a neighbourhood $N\subseteq M$ of $\Gamma$ pulling back $\eta_1$ to $\eta_0$ which is the identity on $\Gamma$. The rest of the proof using Moser's trick follows \emph{mutatis mutandis} the {proof} of Theorem~\ref{th_morsebottmoser}, except that now the function $f$ is fibre-wise \emph{linear}, and hence the pullback $g_s^*f$ of $f$ by the inverse flow $g_s$ of the Euler vector field to $\Gamma$ will be $fe^{-s}$, with the factor of 2 replaced by 1.

            As before, the diffeomorphism of the neighbourhood $N$ extends to a global diffeomorphism $G\colon M\to M$. Then the existence of a diffeomorphism $H$ relating the form on $M\setminus N$ is based on the equality of the corresponding relative cohomology classes $[\eta_0] = [\eta_1]\in H^n(M,\Gamma)$, or equivalently, on the equality of the volumes of connected components $M_i$ of $M\setminus N$. The desired diffeomorphism $\Phi$ is obtained by composing the corresponding diffeomorphisms $G$ and $H$ as in the proof of Theorem~\ref{th_morsebottmoser} (with the application of Hadamard's lemma instead if the diffeomorphism $F$).
        \end{proof}

Note that due to the local-to-global nature of the construction of the diffeomorphism, it immediately follows from the above that if $\eta_0$ and $\eta_1$ are top-degree forms with a shared zero set $\Gamma$ such that each component of $\Gamma$ is either Morse-Bott or non-critical (i.e. $\Gamma$ is of mixed-type) and such that their
relative cohomology classes in $H^n(M\setminus \Gamma)$ coincide, then there is a diffeomorphism $\Phi\colon M\to M$ such that $\Phi^*\eta_1 = \eta_0$ restricting to the identity on $\Gamma$.

    \bibliographystyle{plain}
    \bibliography{bibliography}
\end{document}